\newtheorem{theorem}{Theorem}
\newtheorem{proposition}{Proposition}
\newtheorem{lemma}{Lemma}[section]
\theoremstyle{definition}
\newtheorem{definition}{Definition}
\newtheorem{convention}{Convention}
\theoremstyle{remark}
\newtheorem{remark}{Remark}
\theoremstyle{remark}
\newtheorem{example}{Example}
\theoremstyle{remark}
\numberwithin{equation}{section}
\def\A{{{\mathbb A}}}
\def\I{{{\mathbb I}}}
\def\E{{{\mathbb E }}}
\def\D{{{\mathbb D }}}
\def\C{{{\mathbb C }}}
\def\P{{{\mathbb P }}}
\def\F{{{\mathbb F }}}
\def\L{{{\mathbb L }}}
\def\FF{{{\mathcal F}}}
\def\JJ{{{\mathcal J}}}
\def\MM{{{\mathcal M}}}
\def\NN{{{\mathcal N}}}
\def\OO{{{\mathcal O}}}
\def\EExt{{{\mathcal E}xt}}
\def\FFitt{{{\mathcal F}itt}}
\def\Spec{{{\rm Spec \,}}}
\def\Sing{{{\rm Sing \,}}}
\def\Proj{{{\rm Proj \,}}}
\def\Hilb{{{\rm Hilb \,}}}
\def\Univ{{{\rm Univ \,}}}
\def\Supp{{{\rm Supp \,}}}
\def\dim{{{\rm dim \,}}}
\def\hd{{{\rm hd \,}}}
\def\coker{{{\rm coker \,}}}
\def\ker{{{\rm ker \,}}}
\def\rank{{{\rm rank \,}}}
\def\im{{{\rm im \,}}}
\def\tors{{{\rm tors \,}}}
\def\red{{{\rm red}}}
\def\Ass{{{\rm Ass\,}}}
\title
{Moduli of Admissible Pairs for Arbitrary Dimension, I: Resolution}
\author{{Nadezhda V. Timofeeva\footnote{Center of Integrable Systems, P.G. Demidov Yaroslavl State University, Russia}}}
\date{}
\begin{document}
\large
\maketitle

\renewcommand{\refname}{References}
\renewcommand{\proofname}{Proof.}
\thispagestyle{empty}

\maketitle{}\vspace{1cm} 
\begin{quote}
\noindent{\sc Abstract. } A procedure resolving a family of  torsion-free coherent sheaves on a non-singular $N$-dimensional projective algebraic variety into a family of locally free sheaves on projective schemes of certain class is proposed. This is a higher-dimensional analog of the resolution (called the standard resolution in previous works of the author) for a family of coherent sheaves on a surface. The method is applicable to all existing flat families of torsion-free sheaves including those who does not contain locally free sheaves.

Bibliography: 23 items.
\medskip

\noindent{\bf Keywords:} moduli space, algebraic coherent
sheaves, admissible pairs, vector bundles,
nonsingular alge\-braic variety,  
projective algebraic variety, moduli of vector bundles,
compactification of moduli.
 \end{quote}

\bigskip

\section{Introduction}
In the present paper we do preparatory work for constructing a scheme of moduli of
semistable admissible pairs for arbitrary dimension $> 2$. In the series
of articles \cite{Tim0}--\cite{Tim10} the construction was done in
dimension two: admissible schemes had dimension two and were obtained
by some transform\-ation from a nonsingular projective algebraic surface
$S$. The final result was an isomorphism between the moduli functor of
Gieseker-semistable torsion-free coherent sheaves of rank $r$ and with
Hilbert polynomial $rp(n)$ on the surface $S$ with a fixed
polarization $L$ and the moduli functor of semistable admissible pairs
$((\widetilde S, \widetilde L), \widetilde E)$. Each such a pair consists
of a scheme $\widetilde S$ of some fixed class with a distinguished polarization
$\widetilde L$ and a semistable locally free sheaf $\widetilde E$ of
rank $r$ and with Hilbert polynomial $rp(n)$. The schemes $\widetilde S$ were called admissible schemes. The advantage of the moduli functor of semistable admissible pairs is that  its moduli scheme is isomorphic to the 
Gieseker--Maruyama moduli scheme for the same rank and Hilbert
polynomial. In particular, this provides a
compactification of the moduli space of stable vector bundles by vector
bundles on some special (admissible) schemes instead of the classical
compactification by attaching non-locally free coherent sheaves.
Applying the procedure of the resolution, in the sequel we will develop a functorial approach to constructing of a moduli scheme for such  vector bundles on projective schemes. The whole of the construction generalizes the
analogous results for the two-dimensional case (\cite{Tim10}).

The basement for the subject of interest is the Kobayashi--Hitchin
correspondence which is discussed below. It allows one to apply algebraic geometrical methods to
problems of differential geometrical or gauge theoretical setting by
transferring consideration of the moduli of connections in a vector
bundle (including vector bundles endowed with additional structures)
to consideration of the moduli for vector bundles which are
slope-stable.

 Consider a compact complex algebraic variety $X$,  a complex vector
 bundle $E$ on $X$, Hermitian metrics $g$ on $X$ and $h$ on $E$
 respectively. A holomorphic vector bundle on a compact complex manifold
 with a Hermitian metric is {\it stable}
\cite{LuTel} if and only if $E$ admits an irreducible $g$-Hermitian--Einstein metric. If $E$ is a differentiable vector bundle and  $\bar
\delta$ is a holomorphic structure in  $E$ then  $\bar \delta$ is called
 $g$-{\it stable} \cite{LuTel}  if the holomorphic bundle $(E, \bar \delta)$
 has this property. Let $M^{st}_g(E)$ be the  moduli space for isomorphy classes of $g$-stable
 holomorphic structures on $E$ and $M^{HE}_g(E,h)$ be the moduli space for
 gauge equivalence classes of irreducible $h$-unitary $g$-Hermitian--Einstein
 connections in $E$.
Many mathematicians  (S.~Donaldson,
S.~Kobayashi,
N.~Hitchin, F.~Bogomolov,
N.~Buchdahl, 
M.~Itoh and H.~Nakajima,
J.~Li, K.~Uhlenbeck and S.-T.~Yau
and others) developed the theory in various aspects; its central
result can be formulated as follows (the Kobayashi--Hitchin correspondence) \cite{LuTel}:\\
{\it There exists a real-analytic isomorphism} $M^{HE}_g(E,h) \cong
M^{st}_g(E)$.

 If $X$ is a projective manifold and the Hodge metric  $g$ is associated
 to the ample invertible sheaf~$L$ which induces the immersion of~$X$
 into the projective space then the definition of
$g$-stability coincides with the definition of $L$-slope-stability
of a vector bundle~$E$ \cite{LuTel}.

In the case of rank 2 vector bundles on a surface J.~Li proved
\cite{Li93} that the Donaldson--Uhlenbeck compactification for the moduli
space  (of gauge equivalence classes) of anti-self-dual connections
admits  a complex structure which provides it with a reduced
projective scheme structure. The Gieseker--Maruyama
compactification for the scheme of moduli for stable vector bundles
has a morphism to the compactified scheme of anti-self-dual connections.

Moduli schemes for stable vector bundles on an algebraic variety
usually are nonprojective (and noncompact)  \cite{Mar1,Mar}, and
for application of methods of algebraic geometry it is useful to
include the scheme (or variety) of moduli for vector bundles into
some appropriate projective scheme as its open subscheme. This
problem is traditionally called a problem of compactification of
a moduli space. There arises a natural question of searching for
compactificat\-ions of moduli for vector bundles and of moduli for
connections which admit expanding of the Kobayashi--Hitchin
correspondence to  both compactifications in whole.

The classical Gieseker--Maruyama moduli space for coherent sheaves
includes classes of co\-herent non-locally free sheaves, and hence it
is not suitable for this purpose. Also the following compactifications
(related to the Yang--Mills field theory) are known: the Donaldson--Uhlenbeck compactification \cite{Donaldson89} (involving so-called
ideal connections), the algebraic geometrical functorial approach to its
construction given by V.~Baranovsky \cite{Bar}, the algebraic geometrical
version for framed sheaves done by U.~Bruzzo, D.~Markushevich and
A.~Tikhomirov in \cite{BruMarTikh}, and the Taubes--Uhlenbeck--Feehan compactification \cite{Feehan}. Also there is the
algebraic geometrical bubble-tree compactification announced for
vector bundles on  a surface and constructed for rank 2 vector
bundles by D.~Markushevich, A.~Tikhomirov and G.~Trautmann in 2012 \cite{MTT}. A different approach is developed by the author of the 
present paper for the 2-dimensional case (see \cite{Tim10} and references
therein), and here we describe a first step in the construction of its
multidimensional analog. The next steps are to be described in the next paper.

The compactifications which do not involve non-bundles can be of use for
extend\-ing  the Kobayashi--Hitchin cor\-respond\-ence to the compact case.
Since the Kobayashi--Hitchin correspondence holds not only in
dimension 2, we extend the ``locally free'' compactification to
arbitrary dimension. The Kobayashi--Hitchin correspondence operates
with the coefficient field~$\C$. We work with an arbitr\-ary
algebraically closed field~$k$ of zero characteristic, because the
isomorphism of functors is not subject to special properties of~$\C$.

Let $S$ be a nonsingular (irreducible) projective algebraic variety over an algebraic\-ally closed field $k=\overline k$ of zero characteristic, $\OO_S$ be its structure sheaf, $E$ be a coherent torsion-free $\OO_S$-module, $E^{\vee}:={{\mathcal H}om}_{{\mathcal O}_S}(E, {\mathcal O}_S)$ be its dual ${\mathcal O}_S$-module.  A locally free sheaf and its corresponding vector bundle are canonically identified with each other, and both terms are used as synonyms. Let  $L$ be a very ample invertible sheaf on $S$; it is fixed and is used as a polarization. The symbol $\chi(\cdot)$ denotes the Euler--Poincar\'{e} characteristic of a coherent sheaf, $c_i(\cdot)$ is its $i$-th Chern class.

Recall the definition of a sheaf of 0-th Fitting ideals known from
commutat\-ive algebra. Let $X$ be a scheme and $F$ be an $\OO_X$-module with a
finite presentation $$\widehat F_1 \stackrel{\varphi}{\longrightarrow} \widehat F_0
\to F\to 0.$$ Without loss of generality we assume that  $\rank \widehat F_1
\ge \rank \widehat F_0$.
\begin{definition} {\it The sheaf of 0-th Fitting ideals} of
the $\OO_X$-module $F$ is defined as  $$\FFitt^0 F =\im
(\bigwedge^{\rank \widehat F_0} \widehat F_1 \otimes \bigwedge^{ \rank  \widehat F_0}
\widehat F_0^{\vee} \stackrel{\varphi'}{\longrightarrow}\OO_X),$$ where
$\varphi'$ is the morphism of  $\OO_X$-modules induced by~$\varphi$.
\end{definition}

\begin{remark} We  work with invertible sheaves $L$ and
$\widetilde L$, where the expression for $\widetilde L$ involves
an appropriate tensor power of $L$. In the further considerations we replace $L$ by its big enough tensor power, if necessary, in order to make $\widetilde L$ very ample.
 This power can be chosen uniform
and fixed, as shown in \cite{Tim4} for the case when the homological dimension of $E$ equals one: $\hd E=1$. The same reasoning is true for higher dimensions of $S$ and for arbitrary finite homological dimensions of sheaves. All Hilbert polynomials are
computed according to new $L$ and $\widetilde L$ respectively.
\end{remark}

As shown in \cite{Tim3}, in the case $\dim S= 2$ whenever  the admissible scheme $\widetilde S$ is not isomorphic to $S$, it has the form $$\widetilde S=\Proj \bigoplus_{s\ge
0}(I[t]+(t))^s/(t^{s+1})$$ and is decomposed into the finite union of several
components $\widetilde S=\bigcup_{i\ge 0}\widetilde S_i$. It has a
morphism $\sigma\colon\widetilde S \to S$ which is induced by the
structure of $\OO_S$-algebra on the graded ring \linebreak $\bigoplus_{s\ge
0}(I[t]+(t))^s/(t^{s+1})$. 
In the present paper we develop
a method to resolve singularities of a torsion-free coherent
sheaf in higher dimensions by an iterative process, which leads to an
analog of an admissible scheme for any dimension. The standard
resolution and the notion of admissible scheme for the 2-dimensional case are included in our
method as the simplest nontrivial case.

\smallskip

In the present paper we prove the following result.
\begin{theorem}\label{thres} Let $T$ be an algebraic scheme of finite type over an algebraically closed field  $k$ of characteristic~0 and $L$ be  a  \textup{(}big enough\textup{)} very ample invertible sheaf on a nonsingular projective algebraic variety $S$. Set $\Sigma=T\times S$.
Let $\mathbb E$ be a  $T$-flat coherent   ${\mathcal O}_\Sigma$-module of rank~$r$ such that  $\mathbb E|_{t\times S}$ is a torsion free coherent sheaf, and a fibrewise Hilbert polynomial $\chi(\mathbb E|_{t\times S} \otimes L^n)=rp(n)$ does not depend on the choice of the point  $t\in T.$  
The there exist

$\bullet$ a scheme morphism $\xi \colon \widetilde T \to T$,

$\bullet$ a flat family $\pi\colon\widetilde \Sigma \to \widetilde T$ with a scheme morphism $\sigma\!\!\!\sigma\colon\widetilde \Sigma \to \Sigma$,

$\bullet$ an invertible $\mathcal O_{\widetilde \Sigma}$-sheaf $\widetilde {\mathbb L}$,\\
such that 

$\bullet$ $\widetilde {\mathbb E}=\sigma\!\!\!\sigma^{\Delta}{\mathbb E}$ is a locally free  ${\mathcal O}_{\widetilde \Sigma}$-module,

$\bullet$ $\chi(\widetilde {\mathbb L}^n|_{\pi^{-1}(t)})$ is uniform over $t\in T$,

$\bullet$ $\chi (\widetilde {\mathbb E} \otimes \widetilde {\mathbb L}^n|_{\pi^{-1}(t)})=rp(n)$, if the family  $\E$ contains sheaves which are deformation equivalent to locally free sheaves,

$\bullet$ the following diagram commutes:
$$\xymatrix{ \Sigma \ar[d]_p&\ar[l]_{\sigma\!\!\!\sigma}\widetilde \Sigma \ar[d]^\pi\\
T&\ar[l]_{\xi}\widetilde T
}
$$
\end{theorem}
The symbol $\sigma\!\!\!\sigma^{\Delta}$ denotes the pure transform  (cf. Definition \ref{puretrans}) of a sheaf under the morphism $\sigma\!\!\!\sigma$. The pure transform is expressed as  $\sigma\!\!\!\sigma^{\Delta} \E=\sigma\!\!\!\sigma^{\ast}\E/\tors$, where the subsheaf   $\tors$ generalizes the usual torsion subsheaf and is defined in Section \ref{s5}.
The resolution described in the present paper takes any coherent torsion-free $\OO_S$-sheaf $E$ on a polarized projective scheme $S$ to a pair of the form $((\widetilde S, \widetilde L),\widetilde E)$, which is a generalization of admissible pair in the dimension two (resp., homological dimension 1 for the sheaf  $E$) case. Here $(\widetilde S,\widetilde L)$ is projective polarized scheme; its structure depends on the structure of the sheaf $E$ under the resolution. In the next paper we are intended to develop a functorial approach to a moduli scheme of such pairs. In particular case when $\widetilde S=S$, such a pair is called {\it $S$-pair}. 
\begin{convention}\label{noembed} We assume that {\it the base $T$ of the family of sheaves $\E$ under the resolution contains no embedded components.} This assumption implies that the sheaf $\OO_T$ and locally free $\OO_T$-sheaves are pure.
\end{convention}

Using the results of the present paper, in the forthcoming paper we will prove that a moduli scheme $\widetilde M$ of admissible pairs, which are deformation equivalent to $S$-pairs, is isomorphic to the union of those components of the Gieseker -- Maruyama moduli scheme for semistable coherent sheaves with the same Hilbert polynomial, which contain locally free sheaves. 

In Section \ref{s2} we give general remarks on the procedure of resolution and families to be constructed. Also we give there some important correction and explanation for the previous author's papers \cite{Tim8, Tim10}. Section~\ref{s3} plays the central role and contains the motivation and the description of the resolution. We give a transformation of a family of coherent torsion-free sheaves~$(T, \L,\E)$ with base scheme $T$ into a family of admissible semistable pairs~$((\widetilde T, \pi\colon \widetilde \Sigma \to \widetilde T,  \widetilde \L),\widetilde E)$.  Section~\ref{s4} is devoted to the structure of admissible schemes and to their polarizations. Finally, Section \ref{s5} contains an additional result concerning admissible pairs. It clarifies the structure of admissible schemes and sheaves on them. Also in Section \ref{s5} we give a description for the subsheaf~$\tors$ in the multidimensional case.

{\bf Acknowledgements.} 
This work was carried out within the framework of a development program for the Regional Scientific and Educational Mathematical Center of the P.G. Demidov Yaroslavl State University with financial support from the Ministry of Science and Higher Education of the Russian Federation (Agreement on provision of subsidies from the federal budget No.  075-02-2025-1636). 

Author expresses her sincere gratitude to the anonymous referee for pointing out of a mistake in the previous version of this paper. Author is very grateful to A.I.~Bondal (V.A.~Steklov Institute) for important comments and useful conversations. Also author thanks 
Ye.A.~Medvedev for checking algebraic (ring-theoretic) counterparts of conjectures concerning the behavior of torsions  of sheaves under the morphisms which arise in the resolution.
\smallskip

\section{The procedure of resolution for a sheaf. The procedure of resolution for a family of sheaves. Preliminaries } \label{s2}
{\it Terminological conventions.} In the whole of the text of the present article a {\it scheme} is a Noetherian algebraic scheme of finite type over an algebraically closed field of zero characteristic. The scheme can be non-reduced if the opposite is not stated. The scheme morphism  $f\colon X\to Y$ is called {\it birational} if there are non-empty open subschemes $X_0\subset X$ and $Y_0 \subset Y$ such that the morphism $f$ includes into the commutative diagram  $$\xymatrix{X\ar[r]^f&Y\\
\ar@{^(->}[u] X_0 \ar[r]^{f_0}&Y_0 \ar@{^(->}[u]}
$$
where $f_0$ is  an isomorphism.

Let $S$ be a scheme over a field   $k$ and $E$ be a coherent $\OO_S$-module. The following procedure is mentioned as {\it a resolution of the sheaf} $E$: 
\begin{itemize}
    \item a surjective morphism of $k$-schemes $\sigma\colon \widetilde S \to S.$ In this paper  all the morphisms $\sigma$  are birational;
    \item a transformation taking the $\OO_S$-module $E$ to a locally free $\OO_{\widetilde S}$-module $\widetilde E$. In the present paper the transformations are of the form $E \mapsto \sigma^\ast E/\tors$, where $\tors$ is either the usual torsion sheaf or some generalization of the torsion to be defined below (Section \ref{s5}). 
\end{itemize}

Let $\E$ be a family of coherent sheaves on fibres of the family of projective  $k$-schemes $p\colon \Sigma \to T$. 
{\it A resolution of the family of sheaves} $\E$ is the following procedure: 
\begin{itemize}
    \item a surjective morphism of schemes  $\sigma\!\!\!\sigma\colon \widetilde \Sigma \to \Sigma.$ In the present paper the morphisms $\sigma\!\!\!\sigma$ are birational as well as their analogs for single sheaves;
		\item a birational morphism  $\xi\colon \widetilde T \to T$;
    \item a transformation which takes  $\OO_\Sigma$-module $\E$ to a locally free  $\OO_{\widetilde \Sigma}$-module  $\widetilde \E$. In the present paper the transformations are of the form $\E \mapsto\widetilde \E:=\sigma\!\!\!\sigma^\Delta \E$. 
\end{itemize}

We construct only those resolutions which take flat families of schemes to flat families of schemes. This means that starting from a flat family of schemes   $p\colon \Sigma \to T$ (we start from a trivial family $\Sigma= T\times S$ with the usual projection $p\colon T\times S \to T$, forgetting the second factor) carrying a $T$-flat sheaf of $\OO_{\Sigma}$-modules $\E$, we arrive to a flat family of schemes  $\pi\colon\widetilde \Sigma \to \widetilde T$ which includes into the commutative square 
\begin{equation} \label{tri}
  \xymatrix{\widetilde \Sigma \ar[r] \ar[d]_{\widetilde \pi}&\Sigma \ar[d]^p\\
\widetilde T \ar[r]^\xi&T}  
\end{equation}
The transformation of the base $T$ to $\widetilde T$ can be non-identity in order to the resulting family $\widetilde \pi \colon \widetilde \Sigma \to \widetilde T$ be flat.

We will produce and study the resolution of certain type which we call {\it a standard resolution}. Let $\widetilde t\in \widetilde T$ and $t=\xi (\widetilde t) \in T$ be closed points; then the resolution procedure takes the coherent $\OO_S$-module  $E_t:=\E|_{p^{-1}(t)}$ on the fibre  $p^{-1}(t)=\{t\}\times S \subset \Sigma=T\times S$ to a locally free coherent  $\OO_{\widetilde S_{\widetilde t}}$-module $\widetilde E_{\widetilde t}:=\widetilde \E|_{\widetilde \pi^{-1}(\widetilde t)}$ on the fibre  $\widetilde \pi^{-1}(\widetilde t)=\widetilde S_{\widetilde t} \subset \widetilde \Sigma$.   

As an example (and as a model particular case) of such a transformation we consider non-invertible sheaf of ideals $I\subset \OO_S$ and a scheme of the form $\Proj \bigoplus_{s\ge 0} (I[t]+(t))^s/(t)^{s+1}$. It can be included into a flat family of schemes whose generic fibre is isomorphic to  $S$ (initially $S$ is non-singular variety; in the case when several transformations are composed, beginning with the second step it can be necessary to work with a projective scheme which is not a variety). To build up a family take a product $\Sigma=T\times S$ for $T=\P^1$, fix any closed point $0\in T$ and denote by~$i_0$ the closed immersion  $0\times S \hookrightarrow \Sigma$. Let $\widetilde \Sigma$ be a scheme which is obtained by blowing up of the product $\Sigma$ in the sheaf of ideals $\I:=\ker (\OO_\Sigma \to i_{0\ast}(\OO_S/I))$. The scheme   $\widetilde \Sigma$ is included into the diagram~(\ref{tri}) where $\xi$ is an isomorphism. The morphism $\widetilde\pi$ is projective.  It factors through a closed immersion into an appropriate relative projective space $\widetilde \pi\colon \widetilde \Sigma \stackrel{i}{\hookrightarrow }\P_T \to T$.  For  $\Sigma_0:=\Sigma \setminus (0\times S)$ we have $\widetilde \Sigma=\overline{i(\Sigma_0)}$. Then by  \cite[Ch. III, Proposition 9.8]{Hart}, $\widetilde \Sigma$ is a scheme which is flat over~$T$. Let $\L$ be an invertible $\OO_\Sigma$-sheaf which is very ample relative to~$T$, and let $\L$ be big enough for the sheaf $\widetilde \L=\sigma\!\!\!\sigma^\ast \L \otimes \sigma\!\!\!\sigma^{-1} \I \cdot \OO_{\widetilde \Sigma}$ also to be very ample relative to~$T$. Then the sheaf $\widetilde \pi_\ast (\widetilde \L^n)$ is locally free and  $\rank \widetilde \pi_\ast (\widetilde \L^n) =\rank p_\ast (\L^n)$. 

Also if $t$ is a parameter along  $T$, then  $\widetilde \pi^{-1}0= \Proj \bigoplus_{s\ge 0} (I[t]+(t))^s/(t)^{s+1}.$ The computation of the fibre  $\widetilde \pi^{-1}0$ is transferred directly from \cite{Tim3}, where it was done for  $S$ being a surface, but the argument holds for any scheme $S$ over the field $k$ and for any non-invertible ideal sheaf $I\subset \OO_{0 \times S}$. The fact that the singular locus of the coherent sheaf $E$ under the resolution in  \cite{Tim3} is zero-dimensional, is irrelevant in deducing of the formula for the fibre of the family with one-dimensional base when the family is obtained by a single blowing up with the center in one fibre.

But if the base  $T$ is not a curve, then the preserving of the flatness of the family of schemes after the transformation needs special efforts.  For example set $T=\A^2$, $S$ a surface and the blowing up  done with the centre in a point $0\times p \in T\times S$. Then the fibre $\widetilde S \subset \widetilde{T\times S}$ over the point $0\in T$ is not equidimensional scheme: it contains 3-dimensional exceptional divisor of the blowup morphism as its component. This difficulty is eliminated by a certain transformation of the base of the family. We describe this base transformation below. In the previous papers \cite{Tim8, Tim10} devoted to the case when $S$ is a nonsingular projective surface (and respectively $\hd \E=1$), this circumstance was lost. Also in these papers there were incorrect applications of the definition of flatness in dimensions $\ge n$ \cite[part 1, definition 5.2.1]{RG} and theorem \cite[theorem 5.2.2]{RG} which is based on it. In the present paper we consider the case of a variety $S$ of arbitrary dimension ($\ge 2$). By this reason families of coherent sheaves $\E$  are assumed to be of arbitrary finite homological dimension. Then the case of sheaves on a surface is also included. The procedure of resolution of the family of sheaves on a surface, as it is described in  \cite{Tim8, Tim10}, is preserved, but it must be performed after the base transformation to be described in \S \ref{s3}. Also in \cite{Tim8, Tim10} the expression $\sigma\!\!\!\sigma^{\ast}\E/\tors$ is to be understood as the pure transform of the sheaf $\E$, i.e. the quotient of the sheaf $\sigma\!\!\!\sigma^\ast \E$ under an appropriate subsheaf in the subsheaf of torsion in the modified sense. Changes and corrections which are necessary in the functorial construction of the moduli space for admissible pairs of dimension 2 will be described in the next paper. There we will discuss the analogous construction for arbitrary dimension $\ge 2$.

Also note that if some (not obligatory one-parameter) flat family of schemes $\widetilde \pi\colon\widetilde \Sigma \to \widetilde T$ carries a locally free $\OO_{\widetilde \Sigma}$-sheaf $\widetilde \E$ and an invertible sheaf $\widetilde \L$, which is very ample over  $\widetilde T$, then direct images  $\widetilde \pi_\ast (\widetilde \E \otimes \widetilde \L^n)$ are locally free for  $n\gg 0$ and hence the sheaf $\widetilde \E$ has fibrewise Hilbert polynomial with respect to  $\widetilde \L$ which is the same for all fibres.

\section{The Sheaves-to-Pairs transformation (the standard
resolution)}\label{s3}

The aim of this section is to modify the transformation of a family of  coherent torsion-free sheaves to a
family $((\widetilde S, \widetilde
L), \widetilde E)$ described in Theorem \ref{thres}. The initial family of sheaves  $\E$ may contain no locally free sheaves and  codimension of its singular locus  $\Sing \E$ in the scheme 
$\Sigma=T\times S$ can be greater or equal two. 

We construct a transformation of the initial flat family of coherent torsion-free $\OO_S$-sheaves with the base algebraic scheme $T$, to a flat family of admissible pairs. The base  of the new family of admissible pairs is a scheme $\widetilde T$ which can differ from $T$. 
\subsection{Flatifying base changing}
We start with a flat family of schemes $p\colon \Sigma \to T$ and a sheaf of $\OO_{\Sigma}$-modules $W$ which is flat over $T$. If $\hd W=1,$ then the transformation $\sigma\!\!\!\sigma\colon \widetilde{\Sigma} \to \Sigma$ of the scheme $\Sigma$, which is defined by the sheaf of ideals $\I=\FFitt^0 \EExt^1(W, \OO_{T\times S})$, leads to the scheme $\widetilde{\Sigma}$ of one of two forms:  either $\widetilde{\Sigma} =\Proj \bigoplus_{s\ge 0} \I^s$ or $\widetilde{\Sigma} =\Proj \bigoplus_{s\ge 0} (\I[t]+(t))^s/(t^{s+1})$ depending on the structure of $W$), with a locally free sheaf  $\sigma\!\!\!\sigma^{\ast} W /\tors$. Here $\tors$ is an appropriate subsheaf supported on a proper closed subset of the scheme  $\widetilde{\Sigma}$. Generally, $\widetilde{\Sigma}$ is not obliged to be flat over $T$, as mentioned in  \S \ref{s2}). We are intended to find such a base changing  $\xi \colon \widetilde T \to T$, that it be surjective in the scheme sense (i.~e. the scheme-theoretic image of the morphism $\xi$ coinsides with $T$) and the inverse image of the sheaf $\I$ be flat over $\widetilde T$. 

We need the following result.

Let $T$ be a Noetherian scheme, $X$ a $T$-scheme of finite type, $\MM$ a coherent $\OO_X$-module, $Z \subset T$ a closed subscheme which is defined by the sheaf of ideals $\JJ \subset \OO_T$, $\xi \colon \widehat T \to T$ a morphism of blowing up in the sheaf of ideals $\JJ$. We intriduce the notations: $\widehat \MM=\MM \otimes_{\OO_T}\OO_{\widehat T}$, $\widehat Z=Z\times _T \widehat T$, $Y=Z\times_T X$, $\widehat X=X\times _T \widehat T$, $\widehat Y=Y\times_T \widehat T=\widehat Z\times_{\widehat T} \widehat X$.  Then $\widehat Z$ is a divisor in $\widehat T$. Define a subscheaf  $\widehat \NN \subset \widehat \MM$ as follows: for any affine open subset  $\widehat U \subset \widehat X$ set $\widehat \NN (\widehat U)=\{s\in \widehat \MM (\widehat U)| \Supp s\subset \widehat Y \cap \widehat U \}$.

\begin{definition} \label{puretrans} \cite[Chap. 4, Definition 1]{Ray} {\it The pure transform} $\xi^{\Delta}\MM$  of $\OO_X$-module $\MM$ by the blowing up  $\xi\colon \widehat T \to T$ is a coherent sheaf  $\widehat \MM/\widehat \NN.$  
\end{definition}

\begin{remark} A pure transform is characterized by the following three properties: a) $\xi^{\Delta}\MM$ is a coherent quotient of the inverse image  $\widehat \MM=\xi^\ast \MM$ of the sheaf $\MM$; b) the canonical morphism  $\widehat \MM \to \xi^{\Delta}\MM$ is an isomorphism on the isomorphic open subsets  $\widehat X\setminus \widehat Y \cong X \setminus Y$; c) $\Ass (\xi^{\Delta}\MM)\subset \widehat X\setminus \widehat Y$. 

In particular \cite[Chap. 4]{Ray}, if $\MM$ is  $T$-flat, then $\widehat \MM$ is also $\widehat T$-flat. Since  $\widehat Z$ is a divisor on  $\widehat T$, we have $\Ass (\widehat T)\subset \widehat T\setminus \widehat Z$ and hence  $\Ass (\widehat \MM) \subset \widehat X\setminus \widehat Y$. Consequently, $\widehat \MM=\xi^{\Delta}\MM$.
\end{remark}

\begin{theorem} \label{flatif} \cite[Chap. 4, Theorem 1]{Ray} Let $T, X, \MM$ be as before,  and suppose that $U$ is an open subscheme in $T$ such that the sheaf $\MM|_{X\times _T U}$ is flat over  $U$. Then there exists a blowing up  $\xi\colon \widehat T \to T$ with centre in  $T\setminus U$ such that the pure transform  $\xi^{\Delta}\MM$ of the sheaf $\MM$ is $\widehat T$-flat.
\end{theorem}

Then for the sheaf of ideals  $\I=\FFitt^0 \EExt^1(W, \OO_{\Sigma})$ there is an appropriate blowing up morphism for the base $\xi\colon \widetilde T \to T$ such that the pure transform $\xi^\Delta \I$ is flat over $\widetilde T.$ Then  $\widehat \Sigma:=\Sigma \times_T \widehat T$, and an inclusion $\I \subset \OO_{\Sigma}$ leads to a morphism $\xi^\ast \I \to \OO_{\widehat \Sigma}$, where the sheaf $\OO_{\widehat \Sigma}$ is pure because the scheme  $\widehat \Sigma$ has no embedded components by the restriction imposed on the base $T$ (Convention \ref{noembed}). Hence we have two pure quotients $\xi^\Delta \I$ and $\xi^{-1}\I \cdot \OO_{\widehat \Sigma}$, which are equal  (by the definition of pure quotient). Now $\xi^\Delta \I= \xi^{-1} \I \cdot \OO_{\widehat \Sigma}.$

Next we apply a blowing up $\sigma\!\!\!\sigma \colon \Proj \bigoplus_{s\ge 0} (\xi^{\Delta} \I)^s \to \widehat \Sigma$ or, in dependance on the structure of the family of sheaves under the resolution, the transformation which is analogous to the one in the preceding papers of the author: $\sigma\!\!\! \sigma\colon \Proj \bigoplus_{s\ge 0} (\xi^\Delta \I[t]+t)^s/(t^s) \to \widehat\Sigma$. Both two projective spectra are flat over the base $\widetilde T:=\widehat T$. Hence, for one-step transformation which takes a flat family  $p\colon \Sigma \to T$ to flat family $\widetilde \pi \colon \widetilde \Sigma \to \widetilde T$, we have a commutative diagram with fibred square from the left:
\begin{equation}\label{onestep}
\xymatrix{\Sigma \ar[d]_p &\ar[l]_{\widehat \xi} \widehat \Sigma \ar[d]_\pi &\ar[l]_{\sigma\!\!\!\sigma} \widetilde \Sigma \ar[d]^{\widetilde \pi}\\
T&\ar[l]_\xi \widehat T&\ar@{=}[l] \widetilde T}
\end{equation}

\begin{remark} If homological dimension of the sheaf  $\E$ equals 1, then its transformation is defined by the diagram (\ref{onestep}) for $W=\E$. If homological dimension of the sheaf $\E$ is greater then $1$, i.~e. $\hd \E = \ell >1$, then the resolution requires $\ell$ consequent transformations. Then to preserve flatness of the family of schemes in each step of resolution each  $\sigma\!\!\!\sigma_i$, $i=1, \dots, \ell,$ is preceded by the corresponding blowing up of the base $\xi_i$, $i=1, \dots, \ell.$ Hence,  the transformation of the sheaf $\E$ of  homological dimension  $\ell >1$ is described by the diagram (\ref{onestep}) which is iterated $\ell$ times for the sequence of sheaves  $W$associated with $\E$.
\end{remark}

\subsection{Admissible schemes and nothing else} The notion of an admissible scheme is given by the following 
\begin{definition} For a fixed variety or scheme $S$ the scheme $\widetilde S$ is called {\it admissible} if $\widetilde S \cong S$ or there exist a chain of scheme morphisms  $$\widetilde S=S_\ell \stackrel{\sigma_\ell}{\longrightarrow} S_{\ell-1} \stackrel{\sigma_{\ell-1}}{\longrightarrow} \cdots \stackrel{\sigma_2}{\longrightarrow} S_1 \stackrel{\sigma_1}{\longrightarrow} S_0=S$$
such that $\sigma_i\colon S_i=\Proj_{s\ge 0} (I_i[t]+(t))^s/(t)^{s+1} \to S_{i-1}$ is a structure morphism, $I_i\subset \OO_{S_{i-1}}$ is non-invertible sheaf of ideals, $i=1, \dots, \ell$.
\end{definition}

It remains to confirm that under the base transformation which flatifies the sheaf of ideals  $\I\subset \OO_{\Sigma}$, one gets families which consist of admissible schemes only. It suffices to consider the one-step transformation  (\ref{onestep}) where the form of the morphism  $\sigma\!\!\!\sigma $ depends on the structure of the family  $\E$.

{\it a}) {\it The transformation by a blowing up.} Let the initial family of coherent sheaves $W$ under the resolution, with the base $T$,  contains locally free sheaves. Then the locus $T_0$ of those points in $T$ which correspond to locally free sheaves, is non-empty open subset. Choose an arbitrary closed point $ t \in T\setminus T_0$ and such a curve $C=\Spec k[t] \subset T$ that the image of its point $\{t=0\}$ coincides with $t\in T$, and $C_0:=C\setminus \{t=0\} \subset T_0$. The open subset $C_0 \subset T_0$ does not undergo any changes under base transformation $\xi \colon \widetilde T \to T$. Its closure $\overline C_0$ in $\widetilde T$ is isomorphic to $C$, and we denote it by the same letter: $C \subset \widetilde T$. Also $C \setminus C_0=\{\widetilde t\},$ $\widetilde t \in \widetilde T$, and $\xi (\widetilde t)=t$. 

Blowing up of the scheme  $\widehat \Sigma:=\Sigma\times_T \widetilde T$ in the sheaf of ideals $\xi^{\Delta}\I$ we arrive to a scheme  $\widetilde \Sigma$ which is flat over  $\widetilde T$. It carries an invertible sheaf  $\widetilde \L$ which is very ample over $\widetilde T$. Also Hilbert polynomials of fibres of the morphism $\widetilde \pi\colon \widetilde \Sigma \to \widetilde T$ are equal if they are computed with respect to $\widetilde \L$. For any closed point $t\in \widetilde T$ consider a projective space  $\P=P(H^0(\widetilde \pi^{-1}(t),\widetilde \L^n|_{\widetilde \pi^{-1}(t)})$ and an immersion $j_t\colon \widetilde \pi^{-1}(t) \hookrightarrow \P$ which is induced by the invertible sheaf  $\widetilde \L^n|_{\widetilde \pi^{-1}(t)}$. Let  $P(n)=\xi(j_t^\ast \OO_\P(n))$. Since the scheme  $\widehat \Sigma$ is flat over $\widetilde T$, the polynomial $P(n)$ does not depend on the choice of  $t$.

Now consider the Hilbert scheme  $\Hilb^{P(n)}\P$ of subschemes in  $\P$ with Hilbert polynomial  $P(n)$, and the universal subscheme $\Univ^{P(n)}\P \subset (\Hilb^{P(n)}\P) \times \P.$
Then there are morphisms $\widetilde \mu, \mu$ included into the fibred square
$$\xymatrix{\widetilde \Sigma \ar[d]_{\widetilde \pi} \ar[r]^<<<<<{\widetilde \mu}& \Univ^{P(n)}\P \ar[d]\\
\widetilde T \ar[r]^<<<<<\mu &\Hilb^{P(n)}\P}
$$
In particular, the fibre  $\widetilde S_t:=\widetilde \pi^{-1}(\widetilde t)$ over the point $\widetilde t\in \widetilde T$ is immersed into the space  $\P$ by the morphism  $j_{\widetilde t} \colon \widetilde \pi^{-1}(\widetilde t) \hookrightarrow \P$. This morphism is defined by the ample invertible sheaf  $\widetilde L^n_t:=\widetilde \L^n|_{\widetilde \pi^{-1}(\widetilde t)}$.
If $\widetilde t\in \widetilde T_0=T_0$, then $\widetilde S_{\widetilde t}\cong S$ and $\widetilde L_{\widetilde t} \cong L$.

Note that the fibre  $\widetilde S_t:=\widetilde \pi^{-1}(t)$ contains the component of admissible scheme which is obtained by blowing up of the variety $S$ in the ideal $I_t:=(\xi^\Delta \I)|_{\widetilde \pi^{-1}(\widetilde t)}=\I|_{t\times S}$, as its irreducible component.

Consider such an immersion  $C \hookrightarrow \widetilde T$ that $C\setminus 0 \subset T_0$ and the point  $0\in C$ corresponds to a coherent sheaf which is not locally free. Set $\Sigma_C:=\Sigma \times_T C$ and let  $W_C=W|_{\Sigma_C}$ be a restriction of the family of coherent sheaves to $\Sigma_C$. If the resolution of the family of coherent sheaves with a base  $\widetilde T$ includes the blowing up in the sheaf of ideals  $\xi^\Delta \I$, then the procedure of resolution of obtained family with a base  $C$ includes the blowing up in the sheaf of ideals  $I_t[t]+(t) \subset \OO_{C \times T}$. The fibres of both two families at the point $t=0\in \widetilde T\setminus  T_0$ contain the component which is isomorphic to the variety obtained by blowing up of $S$ in the sheaf of ideals  $I_t$. Fibres at points in  $C\cap T_0$ are isomorphic to $S$. Denote by  $\pi_C \colon \widetilde \Sigma_C \to C$ the family which arises from the resolution of the family with the base $C$. 
Then we have a commutative diagram with fibred square from the right: 
$$\xymatrix{\widetilde \Sigma_C \ar[d]_{\pi_C} \ar@{^(->}[r]&\widetilde \Sigma \ar[d]_{\widetilde \pi} \ar[r]^<<<<<{\widetilde \mu}& \Univ^{P(n)}\P \ar[d]\\
C\ar@{^(->}[r] &\widetilde T \ar[r]^<<<<<\mu &\Hilb^{P(n)}\P}
$$
Fibres $\pi_C^{-1}(0) \subset \widetilde \pi^{-1}(0)$ have equal Hilbert polynomials. They are computed with respect to $\widetilde L_t=\widetilde \L|_{\widetilde \pi^{-1}(t)}$ and equal $P(n)$. This implies that these fibres coincide.

{\it b}) {\it The transformation by the base enhancing.} Now consider the case when the initial family of coherent sheaves $W$ with a base  $T$ contains no locally free sheaves. In this case the transformation of the form $\sigma\!\!\!\sigma \colon \widetilde \Sigma \to \widehat \Sigma$, where $\widetilde \Sigma=\Proj \bigoplus_{s\ge 0} (\xi^\Delta \I [t]+(t))^s/(t^{s+1})$, is done instead of the blowing up of the family $\pi \colon \widehat \Sigma \to \widetilde T$. Since the sheaf of ideals $\xi^\Delta \I$ is flat $\OO_{\widetilde T}$-module, then $\widetilde \Sigma$ is flat over   $\widetilde T$. 

Prove that in this case the only admissible schemes arise as fibres of the new family. For this purpose we construct an immersion  $\widetilde T \cong \widetilde T \times \{ 0\} \hookrightarrow \widetilde T \times C=:T'$, where $C=\Spec k[t]$ and consider $T'\times S$ as the initial family.

For our purpose it suffices to take one-step blowing up $\widetilde{T' \times S} \to T'\times S$ in the sheaf of ideals  $\xi^\Delta \I [t]+(t).$ The scheme $\widetilde{T' \times S}$ is flat over  $T'$. For any point  $\widetilde t \in \widetilde T$ consider a subscheme $\widetilde t \times C \subset T'$ and the corresponding subfamily  $\widetilde \Sigma_{\widetilde t}:=\widetilde t \times _{\widetilde T}\widetilde{T'\times S} \to \widetilde t \times C$.  
It is a flat family and  $\widetilde \Sigma_{\widetilde t}=\Proj \bigoplus_{s\ge 0} (\xi^\Delta \I[t]+(t))^s/(t)^{s+1}$. Now consider the sheaf of ideals  $\xi^\Delta \I|_{\widetilde t \times S}=:I_{\widetilde t}$ and the blowing up of the family  $\widetilde t \times C \times S$ in the sheaf of ideals  $I_{\widetilde t}[t]+(t)$, i.~e. $\sigma\!\!\!\sigma\colon \widehat{\widetilde t \times C \times S} \to \widetilde t \times C \times S$. The scheme  $\widehat{\widetilde t \times C \times S}$ is also flat over  $\widetilde t \times C$, and all its fibres over points  $(\widetilde t, t)$ for $t\ne 0$, are isomorphic to $S$, as well as the corresponding fibres of the scheme $\widetilde \Sigma_{\widetilde t}$. 
The family $\widetilde{T' \times S}$ has a  very ample (relative to $T'$) invertible sheaf  $\widetilde \L=\OO_{T'} \boxtimes L^n \otimes {\sigma\!\!\! \sigma} ^{-1}(\xi^\Delta \I) \cdot \OO_{\widetilde{T' \times S}}$, and Hilbert polynomials of fibres of this family, if being compute with respect to  $\widetilde \L$, are equal for all the fibres. The family  $\widetilde \Sigma_{\widetilde t}$ carries very ample (relative to $\widetilde t \times C$) invertible sheaf  $\widetilde \L_{\widetilde t}=k_{\widetilde t} \boxtimes \OO_{\A^1} \boxtimes L^n \otimes \sigma\!\!\!\sigma_{\widetilde t}^{-1} I_t \cdot \OO_{\widetilde \Sigma_{\widetilde t}}$, which also provides equal Hilbert polynomial on all fibres. The invertible sheaves are related as $\widetilde \L_{\widetilde t}=\widetilde \L |_{\widetilde \Sigma_{\widetilde t}}$. By the universal property of blowups, the fibre of the family $\widehat{\widetilde t \times C \times S}$ over a point $(\widetilde t,0)$, being an admissible scheme, is a sibscheme of the fibre of the family  $\widetilde{T'\times S}$ over the same point.  By the equality of Hilbert polynomials for fibres of both families at the point $(\widetilde t, 0)$ it implies that these fibres coincide as schemes.

\subsection{The procedure of resolution for a family of sheaves} Let $T$ be an arbitrary (possibly non-reduced) $k$-scheme of finite type. Assume that its reduction $T_{red}$ is irreducible.
If $S$ is a surface and  $\E$ is a family of coherent torsion-free sheaves on $S$ over the base $T$ then the homological dimension of $\E$ as an $\OO_{T\times S}$-module is not greater then~1. In our case $S$ has the higher dimension and we have to work with a locally free resolution of the higher length. Set $\Sigma=T\times S$. Start with a shortest locally free resolution of the family of sheaves $\E$ and cut the corresponding exact $\OO_{\Sigma}$-sequence of  length $\ell$
\begin{equation}
  0\to \widehat E_\ell \to \widehat E_{\ell-1}\to \dots \to \widehat E_0 \to \E \to 0 \label{resol}
\end{equation} with locally free $\OO_{\Sigma}$-modules $\widehat E_\ell, \dots \widehat E_0$ into triples:
\begin{equation}
0\to W_i \to \widehat E_{i-1} \to W_{i-1} \to 0. \label{triple}
\end{equation}
Here $W_\ell=\widehat E_\ell$, $W_1=\ker(\widehat E_0 \to \E)$ and $W_i=\ker(\widehat E_{i-1} \to \widehat E_{i-2})=\coker (\widehat E_{i+1}\to \widehat E_i)$ for $i=2,\dots, \ell-1.$ Also we keep in mind that all the sheaves $W_i$ except $W_\ell$ are not locally free (otherwise the resolution can be shorter). Since $S$ is assumed to be regular,  $E$ possesses a locally free resolution of  length not greater then $\dim S$.

Singularities of the sheaf $\E$ can be resolved by the consequent transformations  $\sigma\!\!\!\sigma_i \colon \widehat \Sigma_i \to \Sigma_i$, alternating with flatifying transformations $\widehat \xi_i\colon \Sigma_i \to \widehat \Sigma_{i-1}$, $i=1, \dots, \ell,$ for the  sheaves of ideals
\begin{eqnarray} \I_1=\FFitt^0 \EExt ^1(\widehat \xi_1^\ast  W_{\ell-1}, \OO_{\Sigma_1})\subset \OO_{\Sigma_1},\nonumber \\ \I_2=\FFitt^0 \EExt^1(\widehat \xi_2^\ast \sigma\!\!\!\sigma_1^\ast
\widehat \xi_1^\ast W_{\ell-2},\OO_{\Sigma_2})\subset \OO_{\Sigma_2}, \nonumber \\ \I_3=\FFitt^0
\EExt^1(\widehat\xi_3^\ast\sigma\!\!\!\sigma_2^\ast \widehat \xi_2^\ast\sigma\!\!\!\sigma_1^\ast \widehat \xi_1^\ast W_{\ell-3},\OO_{\Sigma_3})\subset \OO_{\Sigma_3}, \nonumber \\.\;.\;.\;.\;.\;.\;.\;.\;.\;.\;.\;.\;.\;.\;.\;.\;.\;.\;.\;.\;.\;.\;.\;.\;.\;.\;.\;.\;.\;.\;.\;.\;.\;.\;.\;.\;,\nonumber \\ \I_{\ell}=\FFitt^0 \EExt^0(\widehat \xi_\ell^\ast\sigma\!\!\!\sigma_{\ell-1}^\ast \dots \widehat \xi_2^\ast
\sigma\!\!\!\sigma_1^\ast \widehat \xi_1^\ast W_0, \OO_{\Sigma_\ell})\subset \OO_{\Sigma_\ell}.\label{idsh}
\end{eqnarray} These morphisms are included into the following commutative diagram  
\begin{equation*} \xymatrix{
\widehat \Sigma_{\ell} \ar[d]_{\widehat \pi_\ell}
\ar[r]^{\sigma\!\!\!\sigma_\ell}&\Sigma_\ell \ar[d]_{\pi_\ell} \ar[r]^{\widehat \xi_{\ell}}&\widehat \Sigma_{\ell-1} \ar[d]_{\widehat \pi_{\ell-1}}\ar[r]^{\sigma\!\!\!\sigma_{\ell-1}}&\dots
\ar[r]^{\widehat \xi_2}\ar[r]& \widehat \Sigma_1 \ar[d]_{\widehat \pi_1}\ar[r]^{\sigma\!\!\!\sigma_1}&\Sigma_1 \ar[d]_{\pi_1} \ar[r]^{\widehat \xi_1}&\widehat \Sigma_0 \ar[d]_{\widehat \pi_0}\\
T_\ell \ar@{=}[r]&T_\ell \ar[r]^{\xi_\ell}& T_{\ell-1}\ar@{=}[r]&\dots \ar[r]&T_1\ar@{=}[r]&T_1\ar[r]&T}
\end{equation*}
where $\widehat \Sigma_0=\Sigma,$ $T_0=T,$ $\widehat \pi_0=p\colon \Sigma \to T$ is the initial family of schemes (with fibres, isomorphic to  $S$), $\widehat \Sigma_\ell =\widetilde \Sigma,$ $T_\ell=\widetilde T,$  $\widetilde \pi=\widehat \pi_\ell\colon \widetilde \Sigma \to \widetilde T$ is the final family of schemes.

Form a composite morphism $\xi:=\xi_1 \circ \dots \xi_\ell \colon \widetilde T \to T$.  If the sheaves of ideals  (\ref{idsh}) are flat over the corresponding base schemes $T_i,$ $i=1, \dots, \ell$, then as well the sheaves of ideals  $\xi_\ell^\Delta \dots \xi_{i+1}^\Delta \I_i \subset \OO_{\Sigma\times_T \widetilde T}$, $i=1, \dots, \ell$, are also flat over   $\widetilde T.$

\begin{convention}
We assume in the sequel that the base change  $\xi \colon \widetilde T \to T$ is done initially before the transformations given by the morphisms  $\sigma\!\!\!\sigma_i$, $i=1, \dots, \ell$. To simplify the notation we donote the scheme  $\Sigma \times_T \widetilde T$ by $\Sigma=\Sigma_0$, its projection to the base  $\widetilde T$ by $p \colon \Sigma \to \widetilde T$, the family of scheaves  $\widehat \xi^\ast \E$ by $\E$, and family of very ample invertible sheaves  $\widehat \xi^\ast \L$ by $\L$. The transformations  $\sigma\!\!\!\sigma_i \colon \Sigma_i \to \Sigma_{i-1}$ are defined by the sheaves of ideals 
\begin{eqnarray} \I_1=\FFitt^0 \EExt ^1(  W_{\ell-1}, \OO_{\Sigma_0})\subset \OO_{\Sigma_0},\nonumber \\ \I_2=\FFitt^0 \EExt^1(\sigma\!\!\!\sigma_1^\ast
  W_{\ell-2},\OO_{\Sigma_1})\subset \OO_{\Sigma_1}, \nonumber \\ \I_3=\FFitt^0
\EExt^1(\sigma\!\!\!\sigma_2^\ast \sigma\!\!\!\sigma_1^\ast  W_{\ell-3},\OO_{\Sigma_2})\subset \OO_{\Sigma_2}, \nonumber \\.\;.\;.\;.\;.\;.\;.\;.\;.\;.\;.\;.\;.\;.\;.\;.\;.\;.\;.\;.\;.\;.\;.\;.\;.\;.\;.\;.\;.\;.\;.\;.\;.\;,\nonumber \\ \I_{\ell}=\FFitt^0 \EExt^0(\sigma\!\!\!\sigma_{\ell-1}^\ast \dots
\sigma\!\!\!\sigma_1^\ast  W_0, \OO_{\Sigma_{\ell-1}})\subset \OO_{\Sigma_{\ell-1}}.\label{idsh1}
\end{eqnarray}
\end{convention}

Each segment  $$0\to W_{i} \to \widehat E_{i-1} \to W_{i-1} \to 0$$ is resolved by the morphism  $\sigma\!\!\!\sigma_{\ell-i+1}$ on the 
$(\ell-i+1)$-th step of the process of resolution. The form of the morphism  $\sigma\!\!\!\sigma_{\ell-i+1}$ is determined by presence or absence of locally free sheaves among the restrictions of the sheaf  $\sigma\!\!\!\sigma_{\ell-i}^\ast \dots \sigma\!\!\!\sigma_1^\ast W_{i-1}$to fibres of the family  $\pi_{\ell-i}\colon \Sigma_{\ell-i} \to T_{\ell-i}.$
On each step we need the following lemma.


\begin{lemma}\label{mtlemma}\cite[Lemma 2]{Tim8} Let  $X$ be a Noetherian scheme
such that its reduction $X_{\red}$ is irreduc\-ible, $\FF$ nonzero
coherent $\OO_X$-sheaf supported on a subscheme of codimension $\ge
1$. Then the sheaf of 0-th Fitting ideals $\FFitt^0(\FF)$ is an
invertible $\OO_X$-sheaf if and only if $\FF$ has homological
dimension equal to 1: $\hd_X \FF=1.$
\end{lemma}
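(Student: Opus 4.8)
The plan is to reduce to commutative algebra over a Noetherian local ring and then dispatch the two implications, with the reverse one being the crux. Both ``$\FFitt^0(\FF)$ is invertible'' and ``$\hd_X\FF=1$'' can be checked on stalks, so I would fix $x\in X$, put $R=\OO_{X,x}$ and $M=\FF_x$, a finitely presented $R$-module. The hypothesis that $X_{\red}$ is irreducible makes $R$ a local ring with a single minimal prime $\mathfrak p$ (so $R_{\red}$ is a domain), and the hypothesis that $\FF$ is supported in codimension $\ge 1$ forces $M_{\mathfrak p}=0$, i.e. $M$ vanishes at the generic point. I will use two standard facts without further comment: an invertible ideal of $R$ is a principal ideal $(g)$ with $g$ a non-zero-divisor; and for a square matrix over a commutative ring, injectivity is equivalent to the determinant being a non-zero-divisor (McCoy's theorem).

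For ``$\hd_X\FF=1 \Rightarrow \FFitt^0(\FF)$ invertible'' I would take a length-one free resolution $0\to R^{a}\stackrel{\varphi}{\longrightarrow}R^{b}\to M\to 0$. Localizing at $\mathfrak p$ and using $M_{\mathfrak p}=0$ turns $\varphi_{\mathfrak p}$ into an isomorphism of free modules over the Artinian local ring $R_{\mathfrak p}$, and comparing lengths forces $a=b$. Then $\varphi$ is square, so $\FFitt^0(M)=(\det\varphi)$ straight from the definition of the $0$-th Fitting ideal, and injectivity of $\varphi$ gives (McCoy) that $\det\varphi$ is a non-zero-divisor. Hence $\FFitt^0(\FF)$ is principal with non-zero-divisor generator, i.e. invertible.

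The reverse implication is the main obstacle. Assume $\FFitt^0(M)=(g)$ with $g$ a non-zero-divisor and take a finite presentation $R^{b_1}\stackrel{\varphi}{\longrightarrow}R^{b_0}\to M\to 0$, so that $\FFitt^0(M)$ is the ideal generated by the maximal ($b_0\times b_0$) minors of $\varphi$. Vanishing at $\mathfrak p$ shows $\varphi$ has full row rank $b_0$ there, whence $b_1\ge b_0$; the aim is to prove $b_1=b_0$ with the resulting square matrix injective. The key observation is that over the local ring $R$ an ideal generated by the maximal minors which happens to be principal is already generated by a single one of them: applying Nakayama to the generating set of minors of $(g)$, some maximal minor $\Delta$ satisfies $(\Delta)=(g)$. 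After reordering columns, the first $b_0$ columns then form a square submatrix $\varphi_0$ with $\det\varphi_0=\Delta$ a non-zero-divisor, so $\varphi_0$ is injective. I would then show every remaining column $v$ of $\varphi$ lies in $\im\varphi_0$: by Cramer's rule the coordinates of the putative solution of $\varphi_0w=v$ are $w_i=\Delta_i/\det\varphi_0$, where each $\Delta_i$ is, up to sign, another maximal minor of $\varphi$ and hence lies in $(g)=(\det\varphi_0)$; since $\det\varphi_0$ is a non-zero-divisor the $w_i$ are well defined and the adjugate identity yields $\varphi_0w=v$.

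Consequently $\im\varphi=\im\varphi_0$ and $M=\coker\varphi=\coker\varphi_0$ has the length-one free resolution $0\to R^{b_0}\stackrel{\varphi_0}{\longrightarrow}R^{b_0}\to M\to 0$ with $\varphi_0$ injective, so $\hd_X\FF\le 1$; as $M\ne 0$ is torsion it cannot be free, giving $\hd_X\FF=1$. The two steps I expect to need the most care are the passage ``principal ideal of minors $\Rightarrow$ generated by one minor'' (the Nakayama argument, which genuinely uses that $R$ is local) and the redundancy of the extra columns via Cramer's rule and the adjugate identity; the remaining points are routine localization at the minimal prime together with the elementary fact that a square presentation with injective map computes a projective dimension at most one.
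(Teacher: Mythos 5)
Your proposal is correct and complete. Note first that the present paper does not prove this lemma at all: it is imported by citation from \cite[Lemma 2]{Tim8}, so there is no internal proof to compare against; your argument supplies exactly the kind of local commutative-algebra proof the citation points to. The reduction to the stalk $R=\OO_{X,x}$ is legitimate (invertibility of a coherent ideal sheaf and $\hd$ are both stalk-local on a Noetherian scheme), and the hypotheses are used in the right places: irreducibility of $X_{\red}$ gives the unique minimal prime $\mathfrak p$, and the codimension condition gives $M_{\mathfrak p}=0$. The forward direction (length-one resolution, $a=b$ by localizing at the Artinian local ring $R_{\mathfrak p}$, then $\FFitt^0(M)=(\det\varphi)$ with $\det\varphi$ a non-zero-divisor by McCoy) is sound. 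The converse, which is indeed the crux, is handled correctly by the two steps you flagged: since $R$ is local and $I=(g)\cong R$ has one-dimensional $I/\mathfrak{m}I$, Nakayama forces a single maximal minor $\Delta$ to generate, and $(\Delta)=(g)$ with $g$ a non-zero-divisor makes $\Delta$ one as well; then the adjugate identity $\varphi_0\,{\rm adj}(\varphi_0)=\Delta\cdot\mathrm{id}$ together with $\Delta_i\in(\Delta)$ shows each extra column lies in $\im \varphi_0$, the division defining $w_i$ being unambiguous precisely because $\Delta$ is a non-zero-divisor. The endgame is also right: $M\neq 0$ with $M_{\mathfrak p}=0$ cannot be free over the local ring $R$ (a nonzero free module survives localization at $\mathfrak p$), so the projective dimension at points of $\Supp\FF$ is exactly $1$, and since $\FF\neq 0$ the supremum over stalks gives $\hd_X\FF=1$. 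The only cosmetic caveat is to state explicitly the convention that $\hd_X\FF$ is the supremum of the stalkwise projective dimensions, so that stalks outside the support (where $\FFitt^0$ is the unit ideal) do not interfere; with that said, nothing is missing.
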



\begin{remark} If the scheme $ X=\Sigma_i$ has an irreducible
reduction, then this lemma is applicable immediately and we conclude
that  $$\hd \sigma \!\!\! \sigma_{i+1} ^{\ast}\EExt^1(\sigma \!\!\! \sigma_{i}^\ast \dots \sigma \!\!\! \sigma_1 W_{\ell-i},
\OO_{\Sigma_{i-1}})=1.$$ If $ \Sigma_i$ has a reducible reduction then there is
a natural decomposition $\Sigma_i = \Sigma_i^0 \cup \D_i'$ where
$\Sigma_i^0 \cong \Proj \bigoplus _{s\ge 0} \I_i^s$ is isomorphic to the scheme
obtained by  blowing up $\Sigma_{i-1}$ in the sheaf of ideals $\I_i$ and
$\D_i'$ is an exceptional divisor of the blowing up morphism $\sigma
\!\!\! \sigma_i'\colon \Sigma'_i \to \Sigma_{i-1} \times \A^1.$ The
scheme-theoretic intersection $\Sigma_i^0$ and $\D_i'$ equals the exceptional divisor $\D_i$
of the blowing up morphism $\sigma \!\!\! \sigma_i^0\colon \Sigma_i^0 \to
\Sigma_{i-1}$ defined by the sheaf of ideals  $I_i$:
$$\Sigma_i^0 \cap \D_i'=\D_i.$$ We come to the decomposition of the morphism $\sigma\!\!\! \sigma_i\colon
\Sigma_i \stackrel{\delta \!\!\delta_i}{\longrightarrow} \Sigma_i^0
\stackrel{\sigma \!\!\! \sigma_i^0}{\longrightarrow} \Sigma_{i-1}$,
where $\delta \!\!\delta_i$ is an identity on $ \Sigma_i^0$. 
Its action on $\D_1'$
$$
\delta \!\!\delta_1 |_{\D_1'}\colon \D_1' \to  \Sigma_1^0
$$
factors through the exceptional divisor $\D_i=\Proj \bigoplus_{s\ge
0}\I_i^s/\I_i^{s+1}$ of the morph\-ism $\sigma \!\!\! \sigma_i^0$
and is defined by the structure of $\bigoplus_{s\ge 0}
\I_i^s/\I_i^{s+1}$-algebra on the graded ring $\bigoplus_{s\ge 0}
\I_i'^s/\I_i'^{s+1}.$
\end{remark}

\begin{convention}
For uniformity of notation we use the decomposition $\sigma\!\!\!\sigma_i = \sigma\!\!\!\sigma^0_i \circ \delta\!\!\delta_i$ for all $i=1,\dots,\ell$ with no reference  if $\delta\!\!\!\,\delta_i$ is either a non-identity or an identity morphism.
\end{convention}

If on the $i$-th step the transformation had the form  $\Sigma_i=\Proj \bigoplus_{s\ge 0} (\I_i[t]+(t))^s/(t^{s+1}),$ then Lemma \ref{mtlemma} is not applicable at the  $(i+1)$-th step.

To overcome this obstacle we return to the decomposition
$\sigma\!\!\!\sigma_i=
\sigma\!\!\!\sigma_i^0 \circ \delta\!\!\delta_i$ where $\sigma\!\!\!\sigma_i^0$ is a blowup
morphism.

We are interested in the composite morphisms $
\delta\!\!\delta_{i} \circ \sigma\!\!\!\sigma_{i+1}^0 $. The idea is to ``interchange'' the morphisms
in this composite by the commutative diagram
\begin{eqnarray}\label{interch}\xymatrix{
\Sigma_{i+1}^0 \ar[d]_{\delta\!\!\delta_i^{-1}}
\ar[r]^{\sigma\!\!\!\sigma_{i+1}^0}&
\Sigma_i \ar[d]^{\delta\!\!\delta_i}\\
\Sigma_{i+1}^{-1}\ar[r]^{\sigma\!\!\!\sigma_{i+1}^{-1}}& \Sigma_i^0
}\end{eqnarray} as it is explained below.
\begin{remark} We use double indexing with lower and upper indices for
schemes and morph\-isms. For example, $-1$ in notation
$\delta\!\!\delta_i^{-1}$ or $\sigma\!\!\!\sigma_{i+1}^{-1}$ is just
index but neither a sign of an inverse of the map nor a notation for
an inverse image of a sheaf.
\end{remark}
Since $\Sigma_{i+1}^0$ is obtained by a blowing up of the scheme
$\Sigma_i$ in the sheaf of ideals $\I_{i+1}\subset \OO_{\Sigma_i}$
and it can be interpret\-ed as a projective spectrum
$\Sigma_{i+1}^0=\Proj A_{i+1}^0$ of the graded $\OO_{\Sigma_i}$-algebra
$A_{i+1}^0=\bigoplus_{s_i\ge 0}\I_{i+1}^{s_i}$. But $\Sigma_i$
itself is a projective spectrum $\Sigma_i=\Proj A_i$ of the~$\OO_{\Sigma_{i-1}}$-algebra $A_i=\bigoplus_{s_i\ge 0}
(\I_i[t_i]+(t_i))^{s_i}/(t_i)^{s_i+1}$ for $\I_{i} \subset
\OO_{\Sigma_{i-1}}$, $\Sigma_i^0=\Proj A_i^0$ for the~$\OO_{\Sigma_{i-1}}$-algebra $A_i^0=\bigoplus _{s_i\ge 0}
\I_i^{s_i}$, and the morphism $\delta\!\!\delta_i$ is induced by the structure of an~$A_i^0=\bigoplus_{s_i\ge 0} \I_i^{s_i}$-algebra on $A_i=\bigoplus_{s_i\ge 0} (\I_i[t_i]+(t_i))^s_i/(t_i)^{s_i+1}$.

Obviously, $A_i^0=\bigoplus_{s_i\ge 0} \I_i^{s_i}$ is a subalgebra in
$A_i$; it is formed by and only by those elements that have degree
zero in $t_i$, i.e. $A_i^0=A_i|_{t_i=0}.$

Define $\I_{i+1}^0:=\I_{i+1} \cap A_i^0$ as an
intersection with the subalgebra $A_i^0$ in $A_i$ of elements of zero degree in $t_i$.

Obviously, $(\I_{i+1}^0)^u=(\I_{i+1})^u \cap A_i^0$ (whereas $\I_{i+1}$ consists of polynomials in $t_i$, the left hand side and the right hand side of the latter equality consist of homogeneous terms of zero degree in $t_i$) for any $u\ge 0$. 
Define
$$A_{i+1}^{-1}:=\bigoplus_{s_{i+1}\ge 0} \bigoplus_{s_i\ge
0}\I_i^{s_i}(\I_{i+1}^0)^{s_{i+1}}=A_{i+1}^0|_{t_i=0}$$ as a subalgebra of $A_{i+1}^0$ formed by elements of zero degree  in $t_i$. The corresponding scheme is $\Sigma_{i+1}^{-1}:=\Proj A_{i+1}^{-1}$.
The grading on $A_{i+1}^{-1}$ which fixes homogeneous elements when the projective spectrum is formed, is defined by the inclusion to $A_{i+1}^0$ and by the degrees $s_{i+1}$. The morphism $\sigma\!\!\!\sigma_{i+1}^{-1}\colon \Sigma_{i+1}^{-1} \to \Sigma_i^0$ which induced by the inclusion $A_i^0 \hookrightarrow A_{i+1}^{-1}$ is the morphism of the bolwing up in the sheaf of ideals  $\I_{i+1}^0$.
By the construction $A_{i+1}^0=A_{i+1}^{-1} \otimes_{A_i^0} A_i$, and the
cor\-responding square of schemes (\ref{interch}) is fibred.


By iterating of the square similar to (\ref{interch}) the composite of the morphisms $\sigma\!\!\!\sigma_i=\delta\!\!\delta_i \circ
\sigma\!\!\!\sigma_i^0$, $i=1,\dots,\ell,$ can be
decomposed into the following diagram:
\begin{equation}\xymatrix{
\Sigma_\ell \ar[d]_{\delta\!\!\delta^0_\ell}\ar[dr]^{\sigma\!\!\!\sigma_\ell}\\
\Sigma_\ell^0 \ar@{.}[d]\ar[r]^{\sigma\!\!\!\sigma_\ell^0}
&\Sigma_{\ell-1}
\ar@{.}[d] \ar@{.}[rd]\\
\Sigma_\ell^{-i+2} \ar[d]_{\delta\!\!\delta_{i+1}^{-i+1}}
\ar[r]^{\sigma\!\!\!\sigma_{\ell}^{-i+2}} &\Sigma_{\ell-1}^{-i+3}
\ar[d]_{\delta\!\!\delta_{i+1}^{-i+2}} \ar@{.}[r] &\Sigma_{i+1}
\ar[d]_{\delta\!\!\delta_{i+1}^0}
\ar[rd]^{\sigma\!\!\!\sigma_{i+1}}\\
\Sigma_\ell^{-i+1}
\ar[d]_{\delta\!\!\delta_i^{-i}}\ar[r]^{\sigma\!\!\!\sigma_\ell^{-i+1}}
&\Sigma_{\ell-1}^{-i+1} \ar[d]_{\delta\!\!\delta_i^{-i+1}}\ar@{.}[r]
&\Sigma_{i+1}^0 \ar[d]_{\delta\!\!\delta_i^{-1}}
\ar[r]^{\sigma\!\!\!\sigma_{i+1}^0}&
\Sigma_i \ar[d]_{\delta\!\!\delta_i^0} \ar[rd]^{\sigma\!\!\!\sigma_i}\\
\Sigma_\ell^{-i}
\ar@{.}[d]\ar[r]^{\sigma\!\!\!\sigma_\ell^{-i}}&\Sigma_{\ell-1}^{-i+1}
\ar@{.}[d]\ar@{.}[r]&\Sigma_{i+1}^{-1}
\ar[r]^{\sigma\!\!\!\sigma_{i+1}^{-1}}\ar@{.}[d] &\Sigma_i^0
\ar[r]^{\sigma\!\!\!\sigma_i^0}\ar@{.}[d]
&\Sigma_{i-1} \ar@{.}[d] \ar@{.}[rd]\\
\Sigma_{\ell}^{ -\ell+2} \ar[d]_{\delta\!\!\delta_1^{-\ell+1}}
\ar[r]^{\sigma\!\!\!\sigma_\ell^{-\ell+2}}&\Sigma_{\ell-1}^{-\ell+3}
\ar[d]_{\delta\!\!\delta_1^{-\ell+2}}\ar@{.}[r]&\Sigma_{i+1}^{-i+1}
\ar[d]_{\delta\!\!\delta_1^{-i}}\ar[r]^{\sigma\!\!\!\sigma_{i+1}^{-i+1}}&\Sigma_i^{-i+2}
\ar[d]_{\delta\!\!\delta_1^{-i+1}}
\ar[r]^{\sigma\!\!\!\sigma_i^{-i+2}}&\Sigma_{i-1}^{-i+3}
\ar[d]_{\delta\!\!\delta_1^{-i+2}} \ar@{.}[r]
&\Sigma_1 \ar[d]_{\delta\!\!\delta_1^0} \ar[rd]^{\sigma\!\!\!\sigma_1}\\
\Sigma_{\ell}^{-\ell+1}
\ar[r]^{\sigma\!\!\!\sigma_\ell^{-\ell+1}}&\Sigma_{\ell-1}^{-\ell+2}
\ar@{.}[r]&\Sigma_{i+1}^{-i}
\ar[r]^{\sigma\!\!\!\sigma_{i+1}^{-i}}&\Sigma_i^{-i+1}
\ar[r]^{\sigma\!\!\!\sigma_i^{-i+1}}&\Sigma_{i-1}^{-i+2}
\ar@{.}[r]&\Sigma_1^0
\ar[r]^{\sigma\!\!\!\sigma_1^0}&\Sigma}\label{bigdiag}
\end{equation}
Each square of this diagram has a view \begin{equation*}
\xymatrix{\Sigma_i^{-j} \ar[d]_{\delta\!\!\delta_{i-1}^{-j-1}}
\ar[r]^{\sigma\!\!\!\sigma_i^{-j}}& \Sigma_{i-1}^{-j+1}
\ar[d]^{\delta \!\!\delta_{i-1}^{-j}}\\
\Sigma_i^{-j-1} \ar[r]_{\sigma
\!\!\!\sigma_i^{-j-1}}&\Sigma_{i-1}^{-j}}
\end{equation*}
for $i=0, \dots, \ell,$ $j=-1, \dots,i-1$, where $\Sigma_0:=\Sigma$,
$\Sigma_i^1:=\Sigma_i$, $\delta\!\!\delta_i^0:=\delta\!\!\delta_i$.
Index rules are as follows:
\begin{itemize}
\item{for $\Sigma$'s a lower index is constant along the columns and
decreases in the direct\-ion of the horizontal arrows. An upper index is
constant along the skew lines going parallel to the diagonal and
decreases in the direction of the vertical arrows;}
\item{for $\delta\!\!\delta$'s a lower index is constant along the rows
and decreases in the direction of the vertical arrows. An upper index is
constant along the lines going parallel to the diagonal and decreases in
the direction of the vertical arrows;}
\item{for $\sigma\!\!\!\sigma$'s a lower index is constant along the columns
and decreases in the direct\-ion of the horizontal arrows. An upper index is
constant along the lines going parallel to the diagonal and increases in
the direction of the horizontal arrows.}
\end{itemize}

In this diagram the
bottom horizontal row is a composite of consequent blowups.  
The left vertical column $\delta\!\!\delta_1^{-\ell+1} \circ \dots \circ
\delta\!\!\delta_\ell^0$ provides flatness of the scheme
$\Sigma_\ell=\widetilde \Sigma$ over its base~$\widetilde T$ by consequent
``growings up'' additional components of several levels. Application of the functor
$(\delta\!\!\delta_1^{-\ell+1} \circ \dots \circ
\delta\!\!\delta_\ell^0)^\ast$ leads, as we will describe below, to an inverse image of
a locally free sheaf produced from the initial $\OO_\Sigma$-sheaf $\E$ by the morphisms of the bottom horizontal row. Each square of the diagram is an analog of
(\ref{interch}). Also it is useful to keep in mind that each $\Sigma_i^{-i+1}$ is included
into $\Sigma_i$ as a union of some connectad components ($i=1,\dots,\ell$).
\begin{example} Let $\ell =2$ and (\ref{bigdiag}) consists of one
square and two triangles:
\begin{equation}\label{doubletriangle}\xymatrix{\Sigma_2 \ar[d]_{\delta\!\!\!\,\delta_2^0}
\ar[rd]^{\sigma\!\!\!\sigma_2}\\
\Sigma_2^0 \ar[d]_{\delta\!\!\!\,\delta_1^{-1}} \ar[r]^{\sigma\!\!\!\sigma_2^0}&
\Sigma_1 \ar[d]_{\delta\!\!\!\,\delta_1^0} \ar[rd]^{\sigma\!\!\!\sigma_1}\\
\Sigma_2^{-1} \ar[r]^{\sigma\!\!\!\sigma_2^{-1}}& \Sigma_1^0
\ar[r]^{\sigma\!\!\!\sigma_1^0}&\Sigma}
\end{equation}
We do compupations similarly to  \cite{Tim10}. Start with the exact $\OO_{\Sigma}$-sequence $$0\to \widehat E_2 \to \widehat E_{1} \to \widehat E_0 \to \E \to 0,
$$ where sheaves  $\widehat E_i$ are locally free for $i=0,1,2$, and with the exact triple 
\begin{equation}\label{lfr}
0\to \widehat E_2 \to \widehat E_{1} \to W_{1} \to 0.
\end{equation}
produced by cutting the sequence.
Since  $\hd W_{1} = 1$, this sheaf can be resolved in the same fashion as described in  
\cite{Tim10}, by the morphism $\sigma\!\!\!\sigma_1\colon \Sigma_1 \to
\Sigma_0$. Introduce the notation $$H_1:=\ker (\widehat E_2^\vee \to \EExt^1(W_{1},
\OO_{\Sigma_0}))=\coker(W_{1}^{\vee} \to \widehat E_{1}^{\vee})$$ in the dual sequence for (\ref{lfr}):
$$
0\to W_{1}^{\vee} \to \widehat E_{1}^{\vee} \to \widehat E_2^{\vee} \to
\EExt^1(W_{1}, \OO_{\Sigma_0}) \to 0.
$$
Apply an inverse image  $\sigma \!\!\! \sigma_1^{\ast}$:
\begin{eqnarray}\xymatrix{
\sigma \!\!\! \sigma_1^{\ast}\widehat W_1^{\vee}\ar[r]&\sigma \!\!\!
\sigma_1^{\ast}\widehat E_{1}^{\vee}\ar[r]
&\sigma \!\!\! \sigma_1^{\ast}H_1 \ar[r]&0, }  \nonumber\\
\label{c}\xymatrix{ \sigma \!\!\! \sigma_1^{\ast}H_1\ar[r]&\sigma
\!\!\! \sigma_1^{\ast}\widehat E_2^{\vee}\ar[r] &\sigma \!\!\!
\sigma_1^{\ast}\EExt^1(W_{1}, \OO_{\Sigma_0})\ar[r]&0.}
\end{eqnarray}
In (\ref{c}) we denote $N_1:=\ker(\sigma \!\!\!
\sigma_1^{\ast}\widehat E_2^{\vee}\to \sigma \!\!\!
\sigma_1^{\ast}\EExt^1(W_{1}, \OO_{\Sigma_0}))$. The sheaf 
 $\FFitt^0(\sigma \!\!\!
\sigma_1^{\ast}\EExt^1(W_{1}, \OO_{\Sigma_0}))$ is invertible by the functorial property  of Fitting ideals  $\FFitt$: \begin{eqnarray}\FFitt ^0\!(\sigma \!\!\! \sigma_1
^{\ast}\EExt^1\!(W_{1}, \!\OO_{\Sigma_0}\!))\!=\!((\sigma \!\!\!
\sigma_1)^{-1}\!\FFitt ^0\!(\EExt^1\!(W_{1}, \!\OO_{\Sigma_0}\!)))\!\cdot \!
\OO_{\Sigma_1}\!\!=\!\!((\sigma \!\!\! \sigma_1)^{-1}\I_1\!)\!\cdot\!
\OO_{\Sigma_1}.\label{invtty}\end{eqnarray}
In the case when  $\sigma\!\!\!\sigma_1$ is the morphism of the blowing up the chain (\ref{invtty}) proves the invertibility. In the case when $\sigma\!\!\!\sigma_1$ is a transformation with base enhancing we have $\sigma\!\!\! \sigma_1=\sigma\!\!\!\sigma'_1|_{\Sigma_1},$ where $\sigma\!\!\!\sigma'_1 \colon \widehat{\Sigma_0 \times \A^1}\to \Sigma_0 \times \A^1$ is the blowing up in  $\I'_1=\I_1[t]+(t),$ and the immersion  $\Sigma_1 \hookrightarrow \widehat{\Sigma_0 \times \A^1}$ is induced by the fibred square  \begin{equation*}
\xymatrix{\widehat{\Sigma_0 \times \A^1} \ar[r]^{\sigma\!\!\!\sigma_1'}& \Sigma_0
\times \A^1\\
\Sigma_1 \ar@{^(->}[u]^{\widetilde i_0}
\ar[r]^{\sigma\!\!\!\sigma_1}&\Sigma_0 \ar@{^(->}[u]_{i_0}}
\end{equation*}
The immersion $i_0$ identifies  $\Sigma_0$ with the zero fibre  $\Sigma_0 \times 0$. Then
\begin{eqnarray*}
(\sigma \!\!\! \sigma_1^{-1}\I_1)\cdot
\OO_{\Sigma_1}=({\sigma\!\!\!\sigma'_1}^{-1} i_0^{-1}\I_1')\cdot \OO_{
\Sigma_1}=(\widetilde i_0^{-1} {\sigma\!\!\!
\sigma_1'}^{-1}\I_1')\cdot \OO_{\Sigma_1}=\widetilde i_0^{\ast}
({\sigma\!\!\!\sigma_1'}^{-1}\I_{1}' \cdot \OO_{ \Sigma_1'}),
\end{eqnarray*}
where the sheaf of ideals in brackets is invertible. Hence it is invertible also on the component  $\Sigma_1^0 \subset \Sigma_1$. We rewrite it using the decomposition of the morphism $\sigma\!\!\!\sigma_1=\sigma\!\!\!\sigma_1^0 \circ \delta\!\!\!\,\delta_1$:
\begin{eqnarray*}
(\sigma \!\!\! \sigma_1^{-1}\I_1)\cdot
\OO_{\Sigma_1}=({\sigma\!\!\!\sigma_1^0 \circ \delta\!\!\!\,\delta_1}^{-1} \I_1)\cdot \OO_{
\Sigma_1}=\delta\!\!\!\,\delta_1^\ast (({\sigma\!\!\!
\sigma_1^0})^{-1}\I_1\cdot \OO_{\Sigma_1^0}).
\end{eqnarray*}

The sheaf $\FFitt^0(\sigma\!\!\!\sigma_1^{0\ast} \EExt^1(W_1, \OO_{\Sigma_0}))=(\sigma_1^0)^{-1}\I_1 \cdot \OO_{\Sigma_1^0}$ is invertible, and by Lemma  \ref{mtlemma} we conclude that $\hd \sigma\!\!\!\sigma_1^{0\ast}\EExt^1(W_1, \OO_{\Sigma_0})=1.$

Till the end of the example with no reference to the form of the transformation $\sigma\!\!\!\sigma_i,$ $i=1,2,$ we use the decomposition  $\sigma\!\!\!\sigma _i=\sigma\!\!\!\sigma_1^0 \circ \delta\!\!\!\,\delta_i$. In the case when  $\sigma\!\!\!\sigma_1$ is the morphism of blowing up, the morphism  $\delta\!\!\!\,\delta_i$ is an identity morphism.

Hence the sheaf  $N_1=\ker({\sigma\!\!\!\sigma_1^0}^\ast \widehat E_2^{\vee} \to {\sigma\!\!\!\sigma_1^0}^\ast\EExt^1(W_1, \OO_{\Sigma_0}))$  is locally free. Then there is a morphism of locally free sheaves ${\sigma\!\!\!\sigma_1^0}^\ast \widehat E_1^\vee \to N_1$. The sheaf $H_1$ decomposes the morphism  $\widehat E_1 \to \widehat E_2$ into a composite of an epimorphism and a monomorphism. The sheaf $N_1$ decomposes the morphism  ${\sigma\!\!\!\sigma_1^0}^\ast H_1 \to {\sigma\!\!\!\sigma_1^0}^\ast \widehat E_2^\vee$ into a composite of an epimorphism and a monomorphism. The morphism ${\sigma\!\!\!\sigma_1^0}^\ast \widehat E_1^\vee \to {\sigma\!\!\!\sigma_1^0}^\ast H_1$ is an epimorphism. This implies that the composite  ${\sigma\!\!\!\sigma_1^0}^\ast\widehat E_1^\vee \to {\sigma\!\!\!\sigma_1^0}^\ast H_1 \to N_1$ is an epomorphism. Then the kernel of this composite is also locally free; denote $W_1':=\ker({\sigma\!\!\!\sigma_1^0}^\ast\widehat E_1^\vee \to {\sigma\!\!\!\sigma_1^0}^\ast H_1 \to N_1)^\vee.$ We come to the exact triple of locally free sheaves  $0\to {W_1'}^\vee \to {\sigma\!\!\!\sigma_1^0}^\ast \widehat E_1^\vee \to N_1 \to 0.$ Its dual is also exact, and we have a commutative diagram
\begin{equation}
\xymatrix{0\ar[r]& N_1^\vee \ar[r]& {\sigma\!\!\!\sigma_1^0}^\ast \widehat E_1 \ar[r]& W_1' \ar[r]&0\\
0\ar[r]& {\sigma\!\!\!\sigma_1^0}^\ast \widehat E_2 \ar[u] \ar[r]& {\sigma\!\!\!\sigma_1^0}^\ast \widehat E_1\ar@{=}[u] \ar[r]& {\sigma\!\!\!\sigma_1^0}^\ast W_1 \ar[u] \ar[r]&0}
\end{equation}
and an epimorphism  ${\sigma\!\!\!\sigma_1^0}^\ast W_1 \twoheadrightarrow W_1'$ to the pure quotient sheaf.

Applying the inverse image under the morphism  $\sigma\!\!\!\sigma_1$
to the exact triple $0\to W_1 \to \widehat E_0 \to \E \to 0$ we come to the 
$\OO_{\Sigma_1}$-triple $$0\to \delta\!\!\delta_1^{0\ast}W_1' \to
\sigma\!\!\!\sigma_1^\ast \widehat E_0 \to \sigma\!\!\!\sigma_1^\ast \E\to
0.$$ After similar manipulations of resolution with the morphism
$\sigma\!\!\!\sigma_2^0$ applied to this triple, we come to  $\OO_{\Sigma_2^0}$-sheaf
$N_2:=\ker(\sigma\!\!\!\sigma_2^{0\ast} {W_1'}^\vee \to
\sigma\!\!\!\sigma_2^{0\ast} \EExt^1( \sigma\!\!\!\sigma_1^\ast
\E,\OO_{\Sigma_1}))$.

Reasoning in analogous fashion as in the preceding step we conclude that the $0$-th Fitting ideal  
\begin{equation}\FFitt^0 (\sigma\!\!\!\sigma_2^{0\ast} \EExt^1(
\sigma\!\!\!\sigma_1^\ast
\E,\OO_{\Sigma_1}))=(\sigma\!\!\!\sigma_2^0)^{-1}\I_2 \cdot
\OO_{\Sigma_2^0} \label{fittex}\end{equation} is invertible. But the scheme  $\Sigma_2^0$, as well as the scheme  
$\Sigma_1$, is not obliged to have an irreducible reduction which is necessary to validate Lemma  \ref{mtlemma}. The sheaf of Fitting ideals  (\ref{fittex}) can be rewritten in the form  
\begin{eqnarray*}\FFitt^0 (\sigma\!\!\!\sigma_2^{0\ast} \EExt^1(
\sigma\!\!\!\sigma_1^\ast \E,\OO_{\Sigma_1}))=\FFitt^0
(\sigma\!\!\!\sigma_2^{0\ast} \delta\!\!\delta_1^{0\ast}\EExt^1(
\sigma\!\!\!\sigma_1^{0\ast}
\E,\OO_{\Sigma_1^0}))\\=(\sigma\!\!\!\sigma_2^0 \circ
\delta\!\!\delta_1^0)^{-1}\FFitt^0 (\EExt^1(
\sigma\!\!\!\sigma_1^{0\ast} \E,\OO_{\Sigma_1^0})) \cdot
\OO_{\Sigma_2^0}\\=(\delta\!\!\delta_1^{-1}\circ\sigma\!\!\!\sigma_2^{-1})^{-1}\FFitt^0
(\EExt^1(\sigma\!\!\!\sigma_1^{0\ast} \E,\OO_{\Sigma_1^0})) \cdot
\OO_{\Sigma_2^0}\end{eqnarray*} and it is invertible on the whole of the scheme  
$\Sigma_2^0$. 
Hence this sheaf is invertible on its component  $\Sigma_2^{-1}$, i.e. the sheaf
$$(\sigma\!\!\!\sigma_2^{-1})^{-1}\FFitt^0
(\EExt^1(\sigma\!\!\!\sigma_1^{0\ast} \E,\OO_{\Sigma_1^0})) \cdot
\OO_{\Sigma_2^{-1}}=\FFitt^0(\sigma\!\!\!\sigma_2^{-1\ast}
\EExt^1(\sigma\!\!\!\sigma_1^{0\ast} \E,\OO_{\Sigma_1^0}))$$ is also invertible.  Lemma  \ref{mtlemma} yields:  $$\hd
(\sigma\!\!\!\sigma_2^{-1\ast}
\EExt^1(\sigma\!\!\!\sigma_1^{0\ast} \E,\OO_{\Sigma_1^0}))=1.$$ 
This implies that it is possible to replace  $\delta\!\!\!\,\delta_1^0 \circ \sigma\!\!\!\sigma_2^0$ by $\sigma\!\!\!\sigma_2^{-1} \circ \delta\!\!\!\,\delta_1^{-1}$ with respect to the commutative square in the diagram  (\ref{doubletriangle}) and to do resolution procedure by the transformation under the morphism 
$\sigma\!\!\!\sigma_2^{-1}$ following after the transformation under 
$\sigma\!\!\!\sigma_1^0$. After all necessary manipulations we come to a sheaf $N'_2$:
$$N'_2:=\ker(\sigma\!\!\!\sigma_2^{-1\ast} {W_1'}^\vee \to
\sigma\!\!\!\sigma_2^{-1\ast} \EExt^1( \sigma\!\!\!\sigma_1^{0\ast}
\E,\OO_{\Sigma_1^0})).$$ Lemma \ref{mtlemma} is applicable in this circumstance and hence  $N'_2$ is locally free. We have a composite morphism  
$$\sigma\!\!\!\sigma_2^{-1\ast} \sigma\!\!\!\sigma_1^{0\ast}\widehat E_0^\vee \twoheadrightarrow
\sigma\!\!\!\sigma_2^{-1\ast}H'_2\twoheadrightarrow N'_2$$ (where $H'_2=\coker ((\sigma\!\!\!\sigma_1^{0\ast}\E)^\vee \to (\sigma\!\!\!\sigma_1^{0\ast}
\widehat E_0)^\vee)$).
 Its kernel is also a locally free $\OO_{\Sigma_2^{-1}}$-module. Denoting   $$\widetilde \E':=(\ker(\sigma\!\!\!\sigma_2^{-1\ast} \sigma\!\!\!\sigma_1^{0\ast}\widehat E_0^\vee \twoheadrightarrow
\sigma\!\!\!\sigma_2^{-1\ast}H'_2\twoheadrightarrow N'_2))^\vee,$$
we have  $\widetilde \E:=\delta\!\!\delta_2^{0\ast}
\delta\!\!\delta_1^{-1\ast} \widetilde \E'.$  This completes the example.
\end{example}


To generalize this recipe to an arbitrary homological dimension we act
inductively and do interchanging in the inductive step.

At the second step we pass to the next segment
$$
0\to W_{\ell-1} \to \widehat E_{\ell -2} \to W_{\ell -2 }\to 0
$$
and to its ``inverse image'' under $\sigma \!\!\! \sigma_1 ^0$:
$$0\to W'_{\ell -1} \to \sigma \!\!\! \sigma_1 ^{0\ast} \widehat E_{\ell
-2} \to \sigma \!\!\! \sigma_1 ^{0\ast} W_{\ell -2} \to 0.
$$
The ``inverse image'' under $\sigma \!\!\! \sigma_1 =\sigma \!\!\!
\sigma_1 ^0 \circ \delta \!\! \delta_1 ^0 $ is exact because the kernel $W'_{\ell-1}$
is locally free. In this and other
further segments the cokernel sheaves contain torsions. This does not cause any obstacle for the procedure of
resolution because, as we will see later, the resolution leads to
a factoring out of the torsion.

Next steps are similar to each other. They involve inverse images of
the consequent segments
\begin{eqnarray}
0\to W'_{\ell-2} \to \sigma \!\!\! \sigma_2^{0\ast}\sigma \!\!\!
\sigma_1^\ast \widehat E_{\ell-3} \to \sigma \!\!\! \sigma_2 ^{0\ast}\sigma
\!\!\! \sigma_1 ^\ast W_{\ell -3}
\to 0,\nonumber \\
.\quad .\quad .\quad .\quad .\quad .\quad .\quad .\quad .\quad
.\quad .\quad .\quad .\quad .\quad .\quad .\quad .\nonumber \\
0\to W'_{\ell-i} \to \sigma \!\!\! \sigma_i ^{0\ast}
\sigma\!\!\!\sigma_{i-1}^\ast\dots \sigma \!\!\! \sigma_1 ^\ast
\widehat E_{\ell-i-1} \to \sigma \!\!\! \sigma_i ^{0\ast}
\sigma\!\!\!\sigma_{i-1}^\ast \dots \sigma \!\!\! \sigma_1 ^\ast
W_{\ell -i-1}
\to 0,\label{v}\\
.\quad .\quad .\quad .\quad .\quad .\quad .\quad .\quad .\quad
.\quad .\quad .\quad .\quad .\quad .\quad .\quad .\nonumber \\
0\to W'_1 \to \sigma \!\!\! \sigma_{\ell-1} ^\ast\dots \sigma \!\!\!
\sigma_1 ^\ast \widehat E_0 \to \sigma \!\!\! \sigma_{\ell-1} ^\ast\dots
\sigma \!\!\! \sigma_1 ^\ast W_0 \to 0, \nonumber
\end{eqnarray}
where $W_0=\E$ and the kernel sheaf $W'_{\ell -i}$ in the next
triple is a locally free $\OO_{\Sigma_{i-1}}$-module which is produced
by the resolution of $\sigma \!\!\! \sigma_i ^\ast\dots \sigma
\!\!\! \sigma_1 ^\ast W_{\ell -i}$ in the previous triple.

Denote the composite $$\sigma\!\!\!\sigma_{[i]}:=\sigma
\!\!\! \sigma_1 \circ \dots \circ \sigma \!\!\! \sigma_i.$$
To describe the standard resolution of each triple in a uniform
fashion we need com\-pos\-i\-te morphisms
\begin{eqnarray*}
 \sigma\!\!\!\sigma_{[i-1]}\!\circ \!\sigma\!\!\!\sigma_i^0=\sigma\!\!\!\sigma_{[i-2]}\circ \sigma\!\!\!\sigma_{i-1}\circ \sigma\!\!\!\sigma_i^0=\sigma\!\!\!\sigma_{[i-2]}\circ \sigma\!\!\!\sigma_{i-1}^0\circ \delta\!\!\delta_{i-1}^0\circ \sigma\!\!\!\sigma_i^0=
\sigma\!\!\!\sigma_{[i-2]}\circ \sigma\!\!\!\sigma_{i-1}^0\circ  \sigma\!\!\!\sigma_i^{-1}\circ\delta\!\!\delta_{i-1}^{-1}=\\
\sigma\!\!\!\sigma_{[i-3]}\circ \sigma\!\!\!\sigma_{i-2}\circ \sigma\!\!\!\sigma_{i-1}^0\circ  \sigma\!\!\!\sigma_i^{-1}\circ\delta\!\!\delta_{i-1}^{-1}=
\sigma\!\!\!\sigma_{[i-3]}\circ \sigma\!\!\!\sigma_{i-2}^0\circ \delta\!\!\delta_{i-2}^0\circ \sigma\!\!\!\sigma_{i-1}^0\circ  \sigma\!\!\!\sigma_i^{-1}\circ\delta\!\!\delta_{i-1}^{-1}=\\
\sigma\!\!\!\sigma_{[i-3]}\circ \sigma\!\!\!\sigma_{i-2}^0\circ\sigma\!\!\!\sigma_{i-1}^{-1} \circ\delta\!\!\delta_{i-2}^{-1}\circ  \sigma\!\!\!\sigma_i^{-1}\circ\delta\!\!\delta_{i-1}^{-1}=\sigma\!\!\!\sigma_{[i-3]}\circ \sigma\!\!\!\sigma_{i-2}^0\circ\sigma\!\!\!\sigma_{i-1}^{-1} \circ   \sigma\!\!\!\sigma_i^{-2}\circ \delta\!\!\delta_{i-2}^{-2}\circ\delta\!\!\delta_{i-1}^{-1}=\\
\dots=\sigma\!\!\!\sigma_1^0\circ \sigma\!\!\!\sigma_2^{-1}\circ \dots \circ  \sigma\!\!\!\sigma_i^{-i+1}\circ \delta\!\!\delta_1^{-i+1}\circ \delta\!\!\delta_{2}^{-i+2} \circ \dots \circ \delta\!\!\delta_{i-1}^{-1}
=\sigma\!\!\!\sigma_{[i]}^0 \circ \delta\!\!\delta_{[i-1]}^{-1}
\end{eqnarray*}
and $$
\sigma\!\!\!\sigma_{[i]}=\sigma\!\!\!\sigma_{[i-1]}\circ \sigma\!\!\!\sigma_i=
\sigma\!\!\!\sigma_{[i-1]}\circ \sigma\!\!\!\sigma_i^0 \circ \delta\!\!\delta_i^0=
\sigma\!\!\!\sigma_{[i]}^0 \circ \delta\!\!\delta_{[i]}^0,
$$
where we have introduced the notations $$
\sigma\!\!\!\sigma_{[i]}^0:=\sigma\!\!\!\sigma_1^0 \circ \sigma\!\!\!\sigma_2^{-1} \circ \dots \circ
\sigma\!\!\!\sigma_i^{-i+1},\quad
\delta\!\!\delta_{[i]}^0:=\delta\!\!\delta_1^{-i+1}\circ \delta\!\!\delta_2^{-i+2}\circ  \dots \circ
\delta\!\!\delta_i^0,\quad \delta\!\!\delta_{[i-1]}^{-1}:=\delta\!\!\delta_1^{-i+1}\circ \delta\!\!\delta_{2}^{-i+2} \circ \dots \circ \delta\!\!\delta_{i-1}^{-1}.$$ 

We perform the standard resolution of the
triple
$$0\to
\delta\!\!\delta_{[i]}^{0\ast}W'_{\ell-i} \to \sigma \!\!\!
\sigma_{[i]} ^\ast \widehat E_{\ell-i-1} \to \sigma \!\!\! \sigma_{[i]} ^\ast
W_{\ell -i-1} \to 0
$$
for $i=1, \dots, \ell-1.$ Dualization leads to the exact
$\OO_{\Sigma_i}$-sequence
$$
0\to (\sigma \!\!\!\sigma_{[i]}^\ast W_{\ell-i-1})^{\vee} \to \sigma
\!\!\!\sigma_{[i]}^\ast \widehat E_{\ell-i-1}^{\vee} \to
\delta\!\!\delta_{[i]}^{0\ast}{W'}_{\ell-i}^{\vee} \to
\EExt^1(\sigma \!\!\! \sigma_{[i]} ^\ast W_{\ell -i-1},
\OO_{\Sigma_i}) \to 0.
$$
Denote \begin{eqnarray*}H_{i+1}:=\ker
(\delta\!\!\delta_{[i]}^{0\ast}{W'}_{\ell-i}^\vee \to
\EExt^1(\sigma\!\!\!\sigma_{[i]}^\ast W_{\ell-i-1},
\OO_{\Sigma_i}))\\=\coker((\sigma \!\!\!\sigma_{[i]}^\ast
W_{\ell-i-1})^{\vee} \to \sigma \!\!\!\sigma_{[i]}^\ast
\widehat E_{\ell-i-1}^{\vee}).\end{eqnarray*} 

Apply the inverse image $\sigma \!\!\! \sigma_{i+1}^{0\ast}$:
\begin{eqnarray}\xymatrix{
\sigma \!\!\! \sigma_{i+1}^{0\ast}(\sigma \!\!\!\sigma_{[i]}^\ast
W_{\ell-i-1})^{\vee}\ar[r]&\sigma \!\!\! \sigma_{i+1}^{0\ast}\sigma
\!\!\!\sigma_{[i]}^\ast \widehat E_{\ell-i-1}^{\vee} \ar[r]
&\sigma \!\!\! \sigma_{i+1}^{0\ast}H_{i+1} \ar[r]&0, }  \nonumber\\
\label{c0}\xymatrix{ \sigma \!\!\! \sigma_{
i+1}^{0\ast}H_{i+1}\ar[r]&\sigma \!\!\! \sigma_{
i+1}^{0\ast}\delta\!\!\delta_{[i]}^{0\ast}{W'}_{\ell-i}^{\vee}\ar[r]
&\sigma \!\!\! \sigma_{ i+1}^{0\ast}\EExt^1(\sigma \!\!\!
\sigma_{[i]} ^\ast W_{\ell -i-1}, \OO_{\Sigma_i})\ar[r]&0.}
\end{eqnarray}

In  (\ref{c0}) denote $N_{i+1}:=\ker(\sigma \!\!\! \sigma_{
i+1}^{0\ast}\delta\!\!\delta_{[i]}^{0\ast}{W'}_{\ell-i}^{\vee}\to
\sigma \!\!\! \sigma_{ i+1}^{0\ast}\EExt^1(\sigma \!\!\!
\sigma_{[i]} ^\ast W_{\ell -i-1}, \OO_{\Sigma_i}))$. The sheaf \linebreak
$\FFitt^0(\sigma \!\!\! \sigma_{i+1}^{0\ast}\EExt^1(\sigma \!\!\!
\sigma_{[i ]} ^\ast W_{\ell -i-1}, \OO_{\Sigma_i}))$ is invertible
by the functorial property of $\FFitt$:
\begin{eqnarray}\label{fitti}\FFitt ^0(\sigma \!\!\! \sigma_{
i+1}^{0\ast}\EExt^1(\sigma \!\!\! \sigma_{[i]} ^\ast W_{\ell -i-1},
\OO_{\Sigma_{i+1}}))=(\sigma\!\!\!\sigma^0_{i+1})^{-1} \I_{i+1}\cdot
\OO_{ \Sigma_{i+1}}.
\end{eqnarray}

Now we need the following easy
\begin{lemma} There is an isomorphism \begin{equation}
 \EExt^1(\sigma\!\!\!\sigma_{[i]}^\ast W_{\ell-i-1}, \OO_{\Sigma_i})=
 \delta\!\!\delta_{[i]}^{0\ast}\EExt^1(\sigma\!\!\!\sigma_{[i]}^{0\ast}W_{\ell-i-1}, \OO_{ \Sigma_{i}^{-i+1}}).
\end{equation}
\end{lemma}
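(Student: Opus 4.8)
The plan is to recognise the asserted identity as the statement that the functor $\EExt^1(-,\OO)$ commutes with the pullback $\delta:=\delta\!\!\delta_{[i]}^0$. Using the factorisation $\sigma\!\!\!\sigma_{[i]}=\sigma\!\!\!\sigma_{[i]}^0\circ\delta\!\!\delta_{[i]}^0$ established above, one has $\sigma\!\!\!\sigma_{[i]}^\ast=\delta\!\!\delta_{[i]}^{0\ast}\circ\sigma\!\!\!\sigma_{[i]}^{0\ast}$, so writing $\mathcal G:=\sigma\!\!\!\sigma_{[i]}^{0\ast}W_{\ell-i-1}$ for the coherent sheaf on $\Sigma_i^{-i+1}$, the claim reads $\EExt^1(\delta^\ast\mathcal G,\OO_{\Sigma_i})\cong\delta^\ast\EExt^1(\mathcal G,\OO_{\Sigma_i^{-i+1}})$. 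First I would record that $\mathcal G$ has homological dimension $1$ on $\Sigma_i^{-i+1}$: this scheme has irreducible reduction (it is the $i$-fold blow-up sitting in the bottom row of (\ref{bigdiag})), the sheaf $\FFitt^0\EExt^1(\mathcal G,\OO_{\Sigma_i^{-i+1}})$ is invertible by the functorial property of $\FFitt$, exactly as in (\ref{fitti}), and Lemma \ref{mtlemma} then forces $\hd_{\Sigma_i^{-i+1}}\mathcal G=1$.

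Given this, I would fix a two-term locally free resolution $0\to L_1\xrightarrow{\phi}L_0\to\mathcal G\to0$ on $\Sigma_i^{-i+1}$; dualising gives $\EExt^1(\mathcal G,\OO_{\Sigma_i^{-i+1}})=\coker(\phi^\vee\colon L_0^\vee\to L_1^\vee)$. Since $\delta^\ast$ is right exact, \[\delta^\ast\EExt^1(\mathcal G,\OO_{\Sigma_i^{-i+1}})=\coker(\delta^\ast\phi^\vee\colon\delta^\ast L_0^\vee\to\delta^\ast L_1^\vee).\] The key reduction is to prove that the pulled-back complex \[0\to\delta^\ast L_1\xrightarrow{\delta^\ast\phi}\delta^\ast L_0\to\delta^\ast\mathcal G\to0\] remains exact. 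Granting this, it is a locally free resolution of $\delta^\ast\mathcal G=\sigma\!\!\!\sigma_{[i]}^\ast W_{\ell-i-1}$, so $\EExt^1(\delta^\ast\mathcal G,\OO_{\Sigma_i})=\coker((\delta^\ast\phi)^\vee)$; because $\delta^\ast$ commutes with dualisation of locally free sheaves one has $(\delta^\ast\phi)^\vee=\delta^\ast(\phi^\vee)$, and the two cokernels coincide, which is exactly the asserted isomorphism.

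The hard part is thus the exactness of the pulled-back resolution, i.e.\ the vanishing $\TTor_1(\mathcal G,\OO_{\Sigma_i})=\ker(\delta^\ast\phi)=0$ (the $\TTor$ being taken along $\delta$); this is where the failure of $\delta$ to be flat bites, since $\delta$ contracts the grown-up components onto exceptional divisors (cf.\ the Remark on $\Sigma_1=\Sigma_1^0\cup\D_1'$). Conceptually the identity is formal: for a perfect complex the derived dual commutes with arbitrary derived pullback, so $\HHom$-duality gives $L\delta^\ast R\HHom(\mathcal G,\OO)\cong R\HHom(L\delta^\ast\mathcal G,\OO)$ with no flatness hypothesis, and all that must be checked is that no higher $\TTor$ intervenes. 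I would verify this using the Cartesian structure of the squares (\ref{interch}), $A_{i+1}^0=A_{i+1}^{-1}\otimes_{A_i^0}A_i$: over the main component $\Sigma_i^{-i+1}\subset\Sigma_i$ the map $\delta$ is the identity, so exactness is automatic there, while over each grown-up component $\delta$ restricts to a flat (projective-bundle-type) morphism onto an exceptional divisor, along which the restriction of $0\to L_1\to L_0\to\mathcal G\to0$ is pulled back flatly and hence stays exact; patching these over the connected scheme $\Sigma_i$ yields $\TTor_1(\mathcal G,\OO_{\Sigma_i})=0$ and completes the argument.
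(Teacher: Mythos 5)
Your overall architecture is the paper's: the paper takes the concrete two-term locally free resolution $0\to W'_{\ell-i}\to \sigma\!\!\!\sigma_{[i]}^{0\ast}\widehat E_{\ell-i-1}\to \sigma\!\!\!\sigma_{[i]}^{0\ast}W_{\ell-i-1}\to 0$ of (\ref{trii}) (your $L_1\xrightarrow{\phi} L_0\to\mathcal G$, with $L_1=W'_{\ell-i}$ locally free by the inductive construction), pulls it back along $\delta\!\!\delta_{[i]}^{0}$, asserts exactness of the pulled-back triple (\ref{deltatrii}), dualizes both triples and reads the isomorphism off the resulting commutative diagram — exactly your steps 3--7. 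But two of your supporting steps fail as written. First, your derivation of $\hd_{\Sigma_i^{-i+1}}\mathcal G=1$ misapplies Lemma \ref{mtlemma}: applied to $\FF=\EExt^1(\mathcal G,\OO_{\Sigma_i^{-i+1}})$ it yields $\hd\FF=1$, not $\hd\mathcal G=1$; moreover $\FFitt^0\EExt^1(\mathcal G,\OO_{\Sigma_i^{-i+1}})$ is \emph{not} invertible on $\Sigma_i^{-i+1}$ itself — it is essentially the ideal $\I_{i+1}^0$ serving as the center of the \emph{next} blowup, and (\ref{fitti}) asserts invertibility only after applying $(\sigma\!\!\!\sigma_{i+1}^{0})^{-1}(\,\cdot\,)\cdot\OO$, i.e.\ on the next-level scheme. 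This flaw is repairable, since the needed resolution is handed to you by the induction, as in the paper; you do not need an abstract $\hd=1$ statement at all.

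Second, and decisively, your argument for the crux $L_1\delta\!\!\delta_{[i]}^{0\ast}\mathcal G=0$ is circular. On a grown-up component the morphism $\delta\!\!\delta$ factors as a flat morphism onto the exceptional divisor followed by the \emph{closed immersion} $\D_i\hookrightarrow$ (the blown-up scheme) — this is the paper's own description, and the flatness proved in Section \ref{s5} is flatness over $\D_i$, not over the ambient scheme. Hence on that component $L_1\delta\!\!\delta^{\ast}\mathcal G$ is the flat pullback of $\TTor_1(\mathcal G,\OO_{\D_i})$: the flat leg preserves whatever exactness it receives, but the restriction of $0\to L_1\to L_0\to\mathcal G\to 0$ to $\D_i$ is itself a non-flat pullback and is exactly where left-exactness can die. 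Since $\D_i$ is a Cartier divisor, $\TTor_1(\mathcal G,\OO_{\D_i})$ is (a twist of) the subsheaf of $\mathcal G$ annihilated by a local equation of $\D_i$, and $\mathcal G=\sigma\!\!\!\sigma_{[i]}^{0\ast}W_{\ell-i-1}$ is precisely the kind of pullback that acquires torsion along the exceptional locus — that is why the construction divides by $\tors$ in the first place. So your clause ``pulled back flatly and hence stays exact'' assumes the very vanishing to be proved; your patching only re-proves exactness over the open locus where $\delta\!\!\delta$ is an isomorphism, where it is trivial. The paper instead grounds the exactness of (\ref{deltatrii}) in the specific provenance of the kernel: $W'_{\ell-i}$ is the inductively produced locally free sheaf (a dual of a kernel of a surjection of locally free sheaves), and that is the stated reason exactness survives $\delta\!\!\delta_{[i]}^{0\ast}$. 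Your derived-category remark (that $R\HHom$ into $\OO$ commutes with $L\delta\!\!\delta^\ast$ for perfect complexes, so only the interference of higher $\TTor$'s must be excluded) correctly locates the issue, but the exclusion itself is the content of the lemma's proof and is missing from your proposal.
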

\begin{proof}
Start with the exact triple 
\begin{equation}\label{trii}
0\to W'_{\ell-i} \to \sigma\!\!\!\sigma_{[i]}^{0\ast} \widehat E_{\ell-i-1} \to \sigma\!\!\!\sigma_{[i]}^{0\ast} W_{\ell-i-1}\to 0 
\end{equation}
and with its inverse $\delta\!\!\delta_{[i]}^{0}$-image 
\begin{equation}\label{deltatrii}
0\to \delta\!\!\delta_{[i]}^{0\ast}W'_{\ell-i} \to \delta\!\!\delta_{[i]}^{0\ast}\sigma\!\!\!\sigma_{[i]}^{0\ast} \widehat E_{\ell-i-1} \to \delta\!\!\delta_{[i]}^{0\ast}\sigma\!\!\!\sigma_{[i]}^{0\ast} W_{\ell-i-1}\to 0 .    
\end{equation}
The triple (\ref{deltatrii}) is also exact because the sheaf  $W'_{\ell-i}$ is locally free (locally free sheaves $W'_{\ell-i}$ are obtained inductively on $i$).
Dualization of both the triples (\ref{trii},\ref{deltatrii}) and taking an inverse image of the dual of (\ref{trii}) lead to the diagram with exact rows and vertical isomorphisms 
$$\xymatrix{\delta\!\!\delta_{[i]}^{0\ast} (\sigma\!\!\!\sigma_{[i]}^{0\ast} \widehat E_{\ell-i-1})^\vee \ar@{=}[d] \ar[r]& \delta\!\!\delta_{[i]}^{0\ast}(W'_{\ell-i})^\vee\ar@{=}[d] \ar[r]&
\delta\!\!\delta_{[i]}^{0\ast}\EExt^1(\sigma\!\!\!\sigma_{[i]}^{0\ast}W_{\ell-i-1},
 \OO_{\Sigma_{i}^{-i+1}}) \ar@{=}[d]\ar[r]&0\\
(\delta\!\!\delta_{[i]}^{0\ast}\sigma\!\!\!\sigma_{[i]}^{0\ast} \widehat E_{\ell-i-1})^\vee \ar[r]&
(\delta\!\!\delta_{[i]}^{0\ast}W'_{\ell-i})^\vee \ar[r] &\EExt^1(\sigma\!\!\!\sigma_{[i]}^\ast W_{\ell-i-1}, \OO_{\Sigma_i}) \ar[r]&0}
$$
The right hand side vertical isomorphism completes the proof.
\end{proof}

Now (\ref{fitti}) can be rewritten as
\begin{eqnarray*}\FFitt^0 (\sigma\!\!\!\sigma_{i+1}^{0\ast} \EExt^1(
\sigma \!\!\! \sigma_{[i]} ^\ast W_{\ell -i-1},
\OO_{\Sigma_{i+1}})))=\FFitt^0 (\sigma\!\!\!\sigma_{i+1}^{0\ast}
\delta\!\!\delta_{[i]}^{0\ast}\EExt^1(
\sigma\!\!\!\sigma_{[i]}^{0\ast}
W_{\ell-i-1},\OO_{\Sigma_i^{-i+1}}))\\=(\delta\!\!\delta_{[i]}^0
\circ \sigma\!\!\!\sigma_{i+1}^0)^{-1}\FFitt^0 (\EExt^1(
\sigma\!\!\!\sigma_{[i]}^{0\ast}
W_{\ell-i-1},\OO_{\Sigma_i^{-i+1}})) \cdot
\OO_{\Sigma_{i+1}^0}\\
=(\sigma\!\!\!\sigma_{i+1}^{-i}\circ
\delta\!\!\delta_{[i]}^{-1})^{-1}\FFitt^0
(\EExt^1(\sigma\!\!\!\sigma_{[i]}^{0\ast}
W_{\ell-i-1},\OO_{\Sigma_i^{-i+1}})) \cdot
\OO_{\Sigma_{i+1}^0}\end{eqnarray*} (where we use the notation
$\delta\!\!\delta_{[i]}^{-1}:=\delta\!\!\delta_1^{-i}\circ \dots
\circ \delta\!\!\delta_i^{-1}$). This sheaf is invertible on the scheme $\Sigma_{i+1}^0$ in
whole. Hence it is invertible on the
component $\Sigma_{i+1}^{-i}$ of the scheme $\Sigma_{i+1}^0$, i.e. the sheaf
$$(\sigma\!\!\!\sigma_{i+1}^{-i})^{-1}\FFitt^0
(\EExt^1(\sigma\!\!\!\sigma_{[i]}^{0\ast}
W_{\ell-i-1},\OO_{\Sigma_i^{-i+1}})) \cdot \OO_{\Sigma_{i+1}^{-i}}$$
is also invertible.  The scheme $\Sigma_{i+1}^{-i}$ has an irreducible reduction and application of Lemma~\ref{mtlemma} yields:  
\begin{equation}\label{hd}\hd
(\sigma\!\!\!\sigma_{i+1}^{-i})^\ast
\EExt^1(\sigma\!\!\!\sigma_{[i]}^{0\ast}
W_{\ell-i-1},\OO_{\Sigma_i^{-i+1}})=1.
\end{equation} This means that it is
possible to interchange the morphisms $$\delta\!\!\delta_{[i]}^0 \circ
\sigma\!\!\!\sigma_{i+1}^0=\sigma\!\!\!\sigma_{i+1}^{-i}\circ
\delta\!\!\delta_{[i]}^{-1}$$ and to do the resolution by
$(\sigma\!\!\!\sigma_{i+1}^{-i})^\ast$ following
$(\sigma\!\!\!\sigma_{[i]}^0)^\ast$.

We perform all the necessary steps of the resolution beginning with the triple
$$
0\to W'_{\ell-i} \to \sigma\!\!\!\sigma_{[i]}^{0\ast}\widehat E_{\ell-i-1} \to \sigma\!\!\!\sigma_{[i]}^{0\ast}W_{\ell-i-1}\to 0
$$
(generally, over all $i$ inductively along the bottom of the diagram (\ref{bigdiag})).
Denoting 
\begin{eqnarray*}
H'_{i+1}:=\ker ({W'_{\ell-i}}^\vee \to \EExt^1(\sigma\!\!\!\sigma^{0\ast}_{[i]}W_{\ell-i-1}, \OO_{\Sigma_i^{-i+1}}))\\
=\coker ((\sigma\!\!\!\sigma_{[i]}^{0\ast}W_{\ell-i-1})^\vee \to (\sigma\!\!\!\sigma_{[i]}^{0\ast}\widehat E_{\ell-i-1})^\vee)
\end{eqnarray*}
and applying $\sigma\!\!\!\sigma_{i+1}^{-i\ast}$ in view of (\ref{hd}),
we come to the locally free sheaf
$N'_{i+1}$: 
\begin{equation}\label{Nl-i} N'_{i+1}:=\ker(\sigma\!\!\!\sigma_{i+1}^{-i\ast} {W'_{\ell-i}}^\vee \to
\sigma\!\!\!\sigma_{i+1}^{0\ast}
\EExt^1(\sigma\!\!\!\sigma_{[i]}^{0\ast}
W_{\ell-i-1},\OO_{\Sigma_i^{-i+1}})).\end{equation} Now we have a
composite morphism
$$(\sigma\!\!\!\sigma_{[i+1]}^{0\ast}) \widehat E_{\ell-i-1}^\vee \twoheadrightarrow
\sigma\!\!\!\sigma_{i+1}^{-i\ast}H'_{i+1}\twoheadrightarrow N'_{i+1}$$ of locally free $\OO_{\Sigma_{i+1}^{-i}}$-sheaves. 
The kernel of this composite morphism is also locally free. We denote it as ${W'_{\ell-i-1}}^\vee$ 
and from (\ref{Nl-i}) come to the monomorphism $\sigma\!\!\!\sigma_{i+1}^{-i\ast}W'_{\ell-i} \hookrightarrow {N'_{i+1}}^\vee.$
This leads to the commutative diagram with exact rows
$$\xymatrix{0 \ar[r]& \sigma\!\!\!\sigma_{i+1}^{-i\ast}W'_{\ell-i} \ar@{^(->}[d] \ar[r]&
\sigma\!\!\!\sigma_{[i+1]}^{0\ast} \widehat E_{\ell-i-1} \ar@{=}[d] \ar[r]& \sigma\!\!\!\sigma_{[i+1]}^{0\ast}W_{\ell-i-1} \ar@{->>}[d] \ar[r]&0\\
0 \ar[r]& {N'_{i+1}}^\vee \ar[r]& \sigma\!\!\!\sigma_{[i+1]}^{0\ast} \widehat E_{\ell-i-1} \ar[r]& W'_{\ell-i-1} \ar[r]&0}
$$
and $W'_{\ell-i-1}=(\sigma\!\!\!\sigma_{[i+1]}^{0\ast}W_{\ell-i-1})/\tors $. This completes the inductive step of the resolution procedure.

After all the steps one comes to a locally free $\OO_{\Sigma_\ell^{-\ell+1}}$-sheaf \begin{equation} \label{sh1}
   \widehat \E:= \sigma\!\!\!\sigma_{[\ell]}^{0\ast} \E /\tors
\end{equation}
and an $\OO_{\Sigma_\ell}$-sheaf 
\begin{equation} \label{sh2}
\widetilde \E:=\delta\!\!\delta_{[\ell]}^{0\ast}\widehat \E.    
\end{equation} 
We will use the notation $\widetilde \Sigma:=\Sigma_\ell$ which is analogous to one used for a family of admissible schemes in the previous papers.

\begin{remark} Since  $\widetilde \E$ is locally free as 
 $\OO_{\widetilde
\Sigma}$-module and  $\OO_{\widetilde \Sigma}$ is  a $\OO_{\widetilde T}$-flat $\OO_{\widetilde \Sigma}$-module, then
$\widetilde \E$ is flat over $\widetilde T$.
\end{remark}

\begin{remark}
The procedure described involves a choice of locally free resolution of the sheaf under the resolution. But the sequence of morphisms does not depend on this choice due to the following theorem from commutative algebra.
\end{remark}
\begin{theorem}[\cite{Eisen}, Theorem 20.2] Let $R$ be a local ring and $M$ be a finitely generated $R$-module. If $F$ is a minimal free resolution of $M$ then any free resolution of $M$ is isomorphic to the direct sum of $F$ and a trivial complex. In particular, there is  only one (up to isomorphism) minimal free resolution of $M$.
\end{theorem}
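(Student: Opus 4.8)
Write $(R,\mathfrak m,k)$ for the local ring with residue field $k=R/\mathfrak m$; since the statement is only ever applied to finitely generated modules over the Noetherian local rings arising as stalks of the sheaves $\OO_{\Sigma_i}$, I may assume $R$ is Noetherian and $M$ is finitely generated, so that every syzygy module is finitely generated. Recall that a free resolution $F$ is minimal precisely when $d_i(F_i)\subseteq \mathfrak m F_{i-1}$ for all $i$, equivalently when $d_i\otimes_R k=0$; in that case $H_i(F\otimes_R k)=F_i\otimes_R k$, so $\rank F_i=\dim_k \Tor_i^R(M,k)=:\beta_i<\infty$. The plan is to establish two statements and combine them: (A) any two minimal free resolutions of $M$ are isomorphic as complexes, and (B) an arbitrary free resolution $G$ splits as $G\cong F\oplus C$ with $F$ minimal and $C$ a trivial complex.

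For (A), I would lift $\id_M$ to a chain map $\phi\colon F\to F'$ between two minimal resolutions by the comparison theorem. Tensoring with $k$ kills both differentials, so $H_i(\phi\otimes k)$ is simply $\phi_i\otimes k$, and since $\phi$ lifts $\id_M$ this induced map is the identity of $\Tor_i^R(M,k)$, hence an isomorphism. By Nakayama's lemma each $\phi_i$ is then surjective, and as $F_i$ and $F'_i$ are free of the same finite rank $\beta_i$, a surjection between them is an isomorphism. Thus $\phi$ is an isomorphism of complexes.

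For (B), the idea is to use that $F$ and $G$, being projective resolutions of the same module, are homotopy equivalent: I would choose chain maps $\phi\colon F\to G$ and $\psi\colon G\to F$ lifting $\id_M$ with $\psi\phi$ homotopic to $\id_F$. Tensoring the homotopy relation $\psi\phi-\id_F=d\,h+h\,d$ with $k$ and using $d^F\otimes k=0$ gives $(\psi\phi)\otimes k=\id$, so by Nakayama each $(\psi\phi)_i$ is an isomorphism and $\psi\phi$ is an automorphism of $F$. After replacing $\psi$ by $(\psi\phi)^{-1}\psi$ I may assume $\psi\phi=\id_F$; then $\phi$ is a split monomorphism of complexes and $G\cong \im\phi\oplus\ker\psi=:F\oplus C$. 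Each $C_i=\ker\psi_i$ is a direct summand of the free module $G_i$, hence finitely generated projective, hence free. Comparing homology via $H(G)=H(F)\oplus H(C)$ and using that both $F$ and $G$ resolve $M$ forces $H(C)=0$, so $C$ is a bounded-below acyclic complex of finite free $R$-modules; such a complex is contractible, and a contractible complex of free modules is a direct sum of elementary pieces $0\to R\xrightarrow{\id}R\to 0$, i.e.\ a trivial complex. Combining (A) and (B) then yields the assertion, since $F$ is forced to be the minimal resolution.

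The hard part will be (B), and within it two points deserve care. First, the inference from ``$\psi\phi=\id$ modulo $\mathfrak m$'' to ``$\psi\phi$ is an automorphism'' relies essentially on local ring theory: it needs the Betti numbers $\beta_i$ to be finite (so that each $(\psi\phi)_i$ is an endomorphism of a finite free module), Nakayama's lemma, and the fact that a surjective endomorphism of a finitely generated module is injective. Second, identifying the complement $C$ as a trivial complex in the strict sense requires showing that a bounded-below acyclic complex of finite free modules over a local ring splits into copies of $R\xrightarrow{\id}R$; this again follows from projectivity of the images of the differentials together with the finiteness of ranks. Everything else---the comparison theorem, the homotopy equivalence of projective resolutions, and the Nakayama arguments in (A)---I expect to be routine.
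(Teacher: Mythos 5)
Your proposal is correct, and it is essentially the standard argument from the cited source (Eisenbud, Theorem 20.2): the paper itself gives no proof of this statement, quoting it as a known result, and your two-step plan --- Nakayama plus rank comparison for uniqueness of the minimal resolution, then splitting an arbitrary resolution via a homotopy inverse made strict and decomposing the acyclic free complement into pieces $0\to R\stackrel{\id}{\to}R\to 0$ --- is precisely the textbook route. Your explicit restriction to resolutions by finitely generated free modules is legitimate and matches both Eisenbud's Noetherian local setting and the only use made of the theorem here (stalks of the coherent sheaves $W_i$ on the schemes $\Sigma_i$).
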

From this theorem we conclude  inductively that the  sheaves $$\EExt^1(W_{\ell-1},\OO_{\Sigma_1}),\;\;\EExt^1(\sigma\!\!\!\sigma_\ell ^\ast W_{\ell-2},\OO_{\Sigma_2}), \; \dots,\;\EExt^1(\sigma\!\!\!\sigma_2 ^\ast\dots \sigma\!\!\!\sigma_\ell ^\ast W_0, \OO_{\Sigma_{\ell}})$$ are independent of the choice of a locally free resolution.

\section{The structure of an admissible scheme and the fiberwise resolution} \label{s4}
In this section we discuss  the structure of fibers of the morphism  $\widetilde \pi \colon \widetilde \Sigma \to \widetilde T$. 
The resolution procedure leads to the composite of morphisms of $\widetilde T$-schemes $\pi_i\colon \Sigma_i \to T$, $i=0,\dots,\ell$: 
$$\xymatrix{\widetilde \Sigma \ar@{=}[r]&\!\Sigma_\ell \ar[rd]_{\delta\!\!\delta_\ell^0} \ar[rr]^{\sigma\!\!\!\sigma_\ell}&&\Sigma_{\ell-1} \ar[rd]_{\delta\!\!\delta_{\ell-1}^0}\ar[rr]^{\sigma\!\!\!\sigma_{\ell-1}}&&\dots \ar[rr]^{\sigma\!\!\!\sigma_2}\ar[rd]&&\Sigma_1 \ar[dr]_{\delta\!\!\delta_1^0}\ar[rr]^{\sigma\!\!\!\sigma_1}&&\Sigma_0 \ar@{=}[r]&\Sigma\\
&&\Sigma_{\ell}^0\ar[ur]_{\sigma\!\!\!\sigma_\ell^0}&&\Sigma_{\ell-1}^0\ar[ur]&&\Sigma_2^0 \ar[ur]_{\sigma\!\!\!\sigma_2^0}&& \Sigma_1^0 \ar[ur]_{\sigma\!\!\!\sigma_1^0}}
$$
When restricted to the fibers over any fixed closed point $t\in T$ (or, equivalently, one can set $\widetilde T$ to be a reduced point) this chain gives rise to the chain of morphisms among fibers~$S_i=\pi_i^{-1}(t)$:
$$\xymatrix{\widetilde S \ar@{=}[r]&S_\ell \ar[rd]_{\delta_\ell^0} \ar[rr]^{\sigma_\ell}&&S_{\ell-1} \ar[rd]_{\delta_{\ell-1}^0}\ar[rr]^{\sigma_{\ell-1}}&&\dots \ar[rr]^{\sigma_2}\ar[rd]&&S_1 \ar[dr]_{\delta_1^0}\ar[rr]^{\sigma_1}&&S_0 \ar@{=}[r]&S\\
&&S_{\ell}^0\ar[ur]_{\sigma_\ell^0}&&S_{\ell-1}^0\ar[ur]&&S_2^0 \ar[ur]_{\sigma_2^0}&& S_1^0 \ar[ur]_{\sigma_1^0}}
$$
Each of the morphisms $\sigma_i^0$ is a blowing up morphism and each of the morphisms $\delta_i^0$ contracts the corresponding additional component of the scheme $S_i$ onto the exceptional divisor of the morphism $\sigma_i^0$. Moving against arrows we can say that the morphisms $\sigma_i^0$ blow up and the morphisms $\delta_i^0$ grow up the corresponding additional components. Since these two types of morphisms alternate, next blowup is applied to the scheme which consists of several connected components.

\begin{convention}
We use the notations for  single fibers which are completely parallel to the ones for families, preserving the indexing rules as they were used in (\ref{bigdiag}). The fiberwise version for (\ref{bigdiag}) can be obtained when one replaces the symbol $\Sigma$ by the symbol $S$ with all indices preserved. Also, when passing to the morphisms for single fibers, double letters $\sigma\!\!\!\sigma $, $\delta\!\!\delta$ are replaced by respective usual ``single'' letters $\sigma$, $\delta$ with all indices preserved for morphisms.  Each $\delta$ projects its source scheme to its component.
 \end{convention}

We start with the initial nonsingular variety $S_0=S$. When passing to $S_1^0$ and after that to $S_1$, we arrive to the scheme $S_1$ consisting of a principal component $S_1^0$ and an addit\-ional component(s) $S_1^{add}$. The principal component $S_1^0=(\sigma_1^0)^{-1}S$ is an algebraic variety which is obtained by blowing up of $S$. The addit\-ional component $S_1^{add}$ can carry a nonreduced scheme structure and it also can have a reducible reduction. For the sake of convenience we refer to the subscheme  $S_1^{add}$, as well as the subscheme  $S_i^{add}$ which is obtained in analogous way at $i$-th step of the transformation, as to an additional component. In the previous papers this closed subscheme appeared as a union of addit\-ional components of the admissible scheme.

Passing to $S_2^0$ leads to a transformation of both $S_1^0$ and $S_1^{add}$. We come to an algebraic variety $S_2^0=(\sigma_2^{-1})^{-1}S_1^0=(\sigma_2^{-1})^{-1}(\sigma_1^0)^{-1}S$ obtained by blowing up of the principal component $S_1^0$ of $S_1$ and to the scheme $(\sigma_2^0)^{-1}S_1^{add}$. Passing to $S_2$ against $\delta_2^0$ grows up an additional component $S_2^{add}$, and we can write $S_2=S_2^0 \cup (\sigma_2^0)^{-1}S_1^{add} \cup S_2^{add}$. 

Analogously, on $\ell$-th step we have the following scheme 
$$
\widetilde S:=S_\ell=(\sigma_1^0\circ \dots \circ \sigma_\ell^{-\ell+1})^{-1}S \cup 
(\sigma_2^0\circ \dots \circ \sigma_\ell^{-\ell+2})^{-1}S_1^{add}\cup \dots \cup (\sigma_\ell^0)^{-1}S_{\ell-1}^{add}\cup S_\ell^{add}.$$

Depending on the structure of the initial  $\OO_S$-sheaf $E$, several morphisms $\sigma_i$ can turn to be  identities. Hence, the actual length of the chain 
$S_1, \dots, S_\ell$ can vary from 0 (for the case when $E$ is locally free) to the maximal value equal to $\ell=\hd \E.$

To measure numerical invariants of the objects we obtained in the standard resolution we need to fix an appropriate ample invertible sheaf $\widetilde L$ on each $\widetilde S$. The sheaf playing an analogous role was called a {\it distinguished polarization} in previous papers, where the procedure of standard resolution in the $\hd$-one case was developed. These invertible sheaves provide, in particular, fiberwise uniform Hilbert polynomials in flat families of admissible schemes. Strictly speaking, if $\widetilde \L$ is an invertible $\OO_{\widetilde \Sigma}$-sheaf which is very ample relatively to the base $\widetilde T$, then for any closed point $\widetilde t\in \widetilde T$ and for any integer $n\gg 0$ the Hilbert polynomial of the fiber $\widetilde \pi^{-1}(\widetilde t)$ compute as $\chi (\widetilde \L^n|_{\widetilde \pi^{-1}(\widetilde t)})$ is independent of the choice of $\widetilde t\in \widetilde T$.

Let $\Sigma$ carries an invertible sheaf $\L$ which is very ample relatively to the base $\widetilde T$ of the family $\Sigma$. In $\hd$-one case there is a one-step standard resolution by $\widetilde T$-morphism $\sigma\!\!\!\sigma\colon \widetilde \Sigma \to \Sigma$. This resolution procedure is associated to the sheaf of ideals $\I \subset \OO_\Sigma$. In this case it was shown that a distinguished polarization can be chosen as $\widetilde \L=\sigma\!\!\!\sigma^\ast \L^m \otimes \sigma\!\!\!\sigma^{-1}\I \cdot \OO_{\widetilde \Sigma}$, when $m$ is sufficiently big to provide ampleness of $\widetilde \L$ relatively to $\widetilde T$.

In the case of the higher homological dimension $\ell$ this step of constructing of a family of polarizations on $\widetilde T$-scheme $\widetilde \Sigma$ is iterated $\ell$ times until one comes to 
\begin{equation} \label{polarizfam}
    \widetilde \L:=\L_\ell=[\sigma\!\!\!\sigma_\ell^\ast \dots  [\sigma\!\!\!\sigma_2^\ast[\sigma\!\!\!\sigma_1^\ast \L^{m_1} \otimes (\sigma\!\!\!\sigma_1)^{-1} \I_1 \cdot \OO_{\Sigma_1}]^{m_2} \otimes (\sigma\!\!\!\sigma_2)^{-1}\I_2 \cdot \OO_{\Sigma_2}]^{m_3}\dots ]^{m_\ell}\otimes (\sigma\!\!\!\sigma_\ell)^{-1} \I_\ell \cdot \OO_{\Sigma_\ell}
\end{equation}
The corresponding $\widetilde T$-scheme $\Sigma_i$ arising on the $i$-th step of the standard resolution has the fiberwise constant Hilbert polynomial with respect to 
\begin{equation*} 
    \L_i:=[\sigma\!\!\!\sigma_i^\ast \dots  [\sigma\!\!\!\sigma_2^\ast[\sigma\!\!\!\sigma_1^\ast \L^{m_1} \otimes (\sigma\!\!\!\sigma_1)^{-1} \I_1 \cdot \OO_{\Sigma_1}]^{m_2} \otimes (\sigma\!\!\!\sigma_2)^{-1}\I_2 \cdot \OO_{\Sigma_2}]^{m_3}\dots ]^{m_i}\otimes (\sigma\!\!\!\sigma_i)^{-1} \I_i \cdot \OO_{\Sigma_\ell}.
\end{equation*}

A distinguished polarization on a single admissible scheme $\widetilde S$ has a view 
\begin{equation}\label{polarizfib}
\widetilde L:=L_\ell=[\sigma_\ell^\ast \dots  [\sigma_2^\ast[\sigma_1^\ast L^{m_1} \otimes (\sigma_1)^{-1} I_1 \cdot \OO_{S_1}]^{m_2} \otimes (\sigma_2)^{-1}I_2 \cdot \OO_{S_2}]^{m_3}\dots ]^{m_\ell}\otimes (\sigma_\ell)^{-1} I_\ell \cdot \OO_{S_\ell}
\end{equation}
The distinguished polarization $\widetilde L$ is assumed to be fixed for each admissible scheme $\widetilde S$. If $\widetilde S=S$, then $\widetilde L=L$. 

\begin{convention}\label{poldeg}
Now we redenote $\L^{m_1\cdot m_2 \cdot \dots \cdot m_\ell}$ as $\L$  and $L^{m_1\cdot m_2 \cdot \dots \cdot m_\ell}$ as $L$ and from now we work with these new polarization. Also we come to the following shorthand notations for (\ref{polarizfam}) and (\ref{polarizfib}) respectively: 
\begin{eqnarray}
\widetilde \L=\sigma\!\!\!\sigma^\ast \L \otimes \E \mathrm{xc};\\ \E \mathrm{xc}:=[\dots [[ (\sigma\!\!\!\sigma_1)^{-1} \I_1 \cdot \OO_{\Sigma_1}]^{m_2} \otimes (\sigma\!\!\!\sigma_2)^{-1}\I_2 \cdot \OO_{\Sigma_2}]^{m_3}\dots ]^{m_\ell}\otimes (\sigma\!\!\!\sigma_\ell)^{-1} \I_\ell \cdot \OO_{\Sigma_\ell};
\end{eqnarray}
\begin{eqnarray}
\widetilde L=\sigma^\ast L \otimes \mathrm{Exc}_{S_\ell};\label{polshort}\\\mathrm{Exc}_{S_\ell}:=[\dots [[ (\sigma_1)^{-1} I_1 \cdot \OO_{S_1}]^{m_2} \otimes (\sigma_2)^{-1}I_2 \cdot \OO_{S_2}]^{m_3}\dots ]^{m_\ell}\otimes (\sigma_\ell)^{-1} I_\ell \cdot \OO_{S_\ell}. \label{polshortexc}
\end{eqnarray}\end{convention}
Now we come to the equality for the fibrewise Hilbert polynomials $$
\chi(\widetilde L^n)=\chi(L^n)=rp(n), \quad n\gg 0.
$$

The collection of degrees $m_i,\, i=1, \dots,\ell$, defining the change of notation in the Convention  \ref{poldeg}, is to be chosen common for the resolutions of all families of coherent sheaves with given Hilbert polynomial $rp(n)$.  Now confirm that it is possible.

\begin{definition}[\cite{HL}, Definition 1.7.5]  A set of isomorphism classes of coherent sheaves on a projective $k$-scheme~$S$ is {\it bounded}
if there is a $k$-scheme $B$ of finite type and a coherent $\OO_{B\times S}$-sheaf $\F$ such that the given
set is contained in the set $\{\F|_{b\times S}|b\in B \;\,\mbox{\rm  is a closed point}\}$.
\end{definition}
\begin{proposition} Let some set $Q$ of coherent $\OO_S$-sheaves $E$ which are deformation equivalent to locally free sheaves and  with Hilbert polynomials (computed with respect to the polarization $L$) equal to $P(n)$ be bounded. Then
polarizations $\widetilde L$ for all possible schemes $\widetilde S$ obtained by standard resolutions of all the sheaves $E\in Q$ can be chosen in such a way that all the admissible schemes $(\widetilde S, \widetilde L)$ have Hilbert polynomials also equal to~$P(n)$.
\end{proposition}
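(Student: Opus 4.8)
The plan is to run the construction of Sections \ref{s2}--\ref{s3} not for one sheaf at a time but in a single family over the scheme witnessing the boundedness of $Q$, and to isolate the one place where a choice intervenes, namely the exponents $m_1,\dots,m_\ell$ of (\ref{polarizfam}). Once these exponents are fixed \emph{uniformly}, the single-sheaf equality $\chi(\widetilde L^n)=\chi(L^n)$ becomes an equality of \emph{one and the same} polynomial for every $E\in Q$, which is the content of the proposition.

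First I would fix a $k$-scheme $B$ of finite type and a coherent $\OO_{B\times S}$-sheaf $\F$ realizing the boundedness, so that every $E\in Q$ equals $\F|_{b\times S}$ for some closed point $b\in B$. Stratifying $B$ into finitely many locally closed subschemes, I may assume $\F$ is $B$-flat and admits a relative locally free resolution $0\to\widehat E_\ell\to\dots\to\widehat E_0\to\F\to0$ with all terms of fibrewise constant rank; finitely many strata suffice, since the loci of constant fibrewise Betti numbers are locally closed on the Noetherian base. On each stratum the intermediate sheaves $W_i$, the relative Fitting ideals $\I_i=\FFitt^0\EExt^1(\cdots)$, and the iterated blow-ups of Section \ref{s3} are defined verbatim, now relative to $B$. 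Blowing up $\I_i[t]+(t)$ over the base $B\times\A^1$ ($t$ the coordinate on $\A^1$) produces a projective family whose fibre over $(b,t)$ with $t\ne0$ is $(S,L)$ and whose fibre over $(b,0)$ is the admissible scheme $\widetilde S$ attached to $E=\F|_{b\times S}$. For each fixed $b$, the restriction of this family to $\{b\}\times\A^1$ is, exactly as in Section \ref{s2}, the closure of its locus over $\{t\ne0\}$, hence flat over $\A^1$ by \cite[Ch.III, Prop. 9.8]{Hart}. The relative polarization $\widetilde\L$ is then given by formula (\ref{polarizfam}), the only freedom being the exponents $m_i$.

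The heart of the matter is to choose these exponents uniformly. Since the total family over $B\times\A^1$ is of finite type, the collection of ideal sheaves $\I_1,\dots,\I_\ell$ and of admissible schemes $\widetilde S$ arising as $E$ runs through $Q$ is itself bounded. Relative very ampleness on a bounded family is achieved after a single twist whose size is independent of $b$ (a uniform Serre vanishing, cf. \cite{HL}): there is an integer $m_0$ such that for all $m_1,\dots,m_\ell\ge m_0$ the sheaf $\widetilde\L$ of (\ref{polarizfam}) is very ample relative to the base simultaneously for every $E\in Q$. I fix such exponents once and for all and re-denote $L:=L^{m_1\cdots m_\ell}$ as in the text, so that one and the same $L$---hence one and the same polynomial $\chi(L^n)$---serves every member of $Q$. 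Now for each $b$ the flatness over $\{b\}\times\A^1$ forces the fibrewise Hilbert polynomial to be constant; specializing from a point with $t\ne0$, where the fibre is $(S,L)$, gives $\chi(\widetilde L^n)=\chi(L^n)=P(n)$ for the admissible scheme of every $E\in Q$. This is precisely the single-sheaf equality $\chi(\widetilde L^n)=\chi(L^n)$, now rendered uniform over the whole family.

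I expect the principal difficulty to lie in the family bookkeeping of the middle step: checking that the relative Fitting ideals and the iterated blow-ups organize into finitely many families over strata of $B$, and---most delicately---that the interchange of morphisms encoded in diagram (\ref{bigdiag}), on which the applicability of Lemma \ref{mtlemma} depends, goes through uniformly in the family, so that each scheme $\Sigma_i^{-i}$ retains an irreducible reduction fibrewise. Granting this, the uniform ampleness twist and the flatness argument for constancy of the Hilbert polynomial follow from standard boundedness results.
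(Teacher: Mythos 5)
Your proposal is correct and follows essentially the same route as the paper: the paper likewise realizes $Q$ by a single flat family, taking $B$ reduced and deleting a closed subscheme $B_0$ so that constancy of the Hilbert polynomial over the reduced base forces $\F$ to be flat over $T=B\setminus B_0$ (no stratification is needed, since all $E\in Q$ share $P(n)$), and then performs the family version of the standard resolution of Section \ref{s3} over $T$, with fibrewise constancy of $\chi(\widetilde \L^n|_{\pi^{-1}(b)})$ supplied by the flatness arguments of Sections \ref{s2} and \ref{s4}. Your extra bookkeeping --- the uniform choice of the exponents $m_i$ in (\ref{polarizfam}) and the specialization along $\A^1$ to the general fibre $(S,L)$ --- is exactly the content the paper delegates to those sections rather than re-deriving inside the proof.
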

\begin{proof} Since the definition of bounded  set of sheaves involves only closed points of the scheme $B$, we can assume this scheme to be reduced. We are interested in $\OO_S$-sheaves with equal Hilbert polynomials. Hence, after excluding, if necessary, some closed subscheme~$B_0$ from $B$ we can suppose the sheaf $\F$ to be flat over $B\setminus B_0$. Reducedness of the scheme~$B\setminus B_0$ is needed to conclude flatness from fibrewise uniform  Hilbert polynomials. The open subscheme $B\setminus B_0$ contains all the closed points $b\in B$ such that $\F|_{b\times S} \in Q$.  Now one can set $T:=B\setminus B_0$ and perform the standard resolution of the family $\E=\F|_{T\times S}$. This resolution leads to the family of schemes $\widetilde \pi\colon \widetilde \Sigma \to \widetilde T$ with the family of distinguished polarizations~$\widetilde \L$ such that the fibrewise Hilbert polynomial $\chi(\widetilde \L^n|_{\widetilde \pi^{-1}(\widetilde b)})$ is uniform over $\widetilde b\in \widetilde T$ and, by deformation equivalence to pairs with $\widetilde S \cong S,$ it equals~$P(n)$.  
\end{proof}

\section{Further remarks on the additional components}\label{s5}

Each admissible scheme $\widetilde S=S_\ell$ is built iteratively by alternating transformations of two types: these are blowing ups $\sigma_i^0$ followed by growing ups of additional components $\delta_i=\delta_i^0$, $i=1,\dots, \ell$. Additional component $S_i^{add}=\Proj \bigoplus_{s\ge 0}(I_i[t]+(t))^s/(I_i[t]+(t))^{s+1}$ grown by $\delta_i$ intersects the union $S_i^0$ of all the components of the  previous ($(i-1)^{\mathrm{st}}$) level along an exceptional divisor $D_i=\Proj \bigoplus_{s\ge 0}I_i^s/I_i^{s+1}$ of the blowing up morphism $\sigma_i^0$.

For now we work with a commutative  ring $A$ and a proper ideal $I\subset A$; denote $\widehat A:= \bigoplus_{s\ge 0} (I^s/I^{s+1})$. Also set $M=\bigoplus_{s\ge 0}(I[t]+(t))^s/(I[t]+(t))^{s+1}=\bigoplus_{s\ge 0} M_s$ for $M_s=(I[t]+(t))^s/(I[t]+(t))^{s+1}$.
\begin{proposition}
$M=\bigoplus_{s\ge 0}(I[t]+(t))^s/(I[t]+(t))^{s+1}$ is flat as an $\widehat A$-module. 
\end{proposition}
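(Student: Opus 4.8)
The plan is to recognize both graded rings as associated graded rings and then to identify $M$ with a polynomial ring over $\widehat A$. Write $J:=I[t]+(t)\subset A[t]$, so that $\widehat A=\gr_I A$ and $M=\gr_J A[t]$. The inclusion $A\hookrightarrow A[t]$ carries $I^s$ into $J^s$ and $I^{s+1}$ into $J^{s+1}$, hence induces a homomorphism of graded rings $\widehat A\to M$ through which $M$ becomes an $\widehat A$-algebra; concretely, $\widehat A_s=I^s/I^{s+1}$ lands in the ``coefficient of $t^0$'' part of $M_s$. The goal is then to prove that $M$ is \emph{free}, hence flat, as an $\widehat A$-module.

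The computational heart of the argument is an explicit description of the powers of $J$. I would first expand the ideal power, using $(I[t]+(t))^s=\sum_{k+l=s}I^k[t]\,t^l$, and collect the coefficient of each monomial $t^m$ to obtain
$$J^s=\Bigl\{\,\textstyle\sum_m a_m t^m \;:\; a_m\in I^{\max(s-m,0)}\,\Bigr\}.$$
Granting this (which I would sanity-check at $s=1$, where it recovers $J=\{a_0+a_1t+\cdots:a_0\in I\}$), the coefficient-of-$t^m$ maps show that the graded piece $M_s=J^s/J^{s+1}$ decomposes as $\bigoplus_{k=0}^{s} I^{s-k}/I^{s-k+1}=\bigoplus_{k=0}^{s}\widehat A_{s-k}$, where the $k$-th summand records the coefficient of $t^k$; the boundary term $k=s$ contributes $A/I=\widehat A_0$ through the coefficient of $t^s$.

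With this in hand, let $\bar t\in M_1$ denote the class of $t$ and consider the graded $\widehat A$-algebra map $\widehat A[T]\to M$, $T\mapsto\bar t$, with $T$ placed in degree $1$; in degree $s$ it is the map $\bigoplus_{k=0}^s\widehat A_{s-k}T^k\to M_s$. By the previous paragraph it is an isomorphism in each degree: the summand $\widehat A_{s-k}T^k$ maps isomorphically onto the coefficient-of-$t^k$ part of $M_s$, and an element $a\,\bar t^k$ with $a\in I^{s-k}$ vanishes in $M$ exactly when $a\in I^{s-k+1}$, i.e. when its class in $\widehat A_{s-k}$ is zero. Therefore $M\cong\widehat A[T]=\bigoplus_{k\ge 0}\widehat A\,\bar t^k$, a free $\widehat A$-module, and freeness gives flatness.

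The main obstacle is the computation of $J^s$: one must verify, monomial by monomial, that the coefficient of $t^m$ ranges over exactly $I^{\max(s-m,0)}$ and no smaller ideal, and in particular that the top monomial $t^s$ forces only membership in $I^0=A$ so as to contribute all of $A/I$ to $M_s$. A secondary point requiring care is checking that multiplication in $M$ is compatible with that of $\widehat A[T]$, so that the degreewise bijections assemble into an algebra isomorphism; for the flatness statement, however, only the module-level direct-sum decomposition $M=\bigoplus_{k\ge 0}\widehat A\,\bar t^k$ together with the freeness of each generator $\bar t^k$ is strictly needed.
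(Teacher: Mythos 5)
Your proof is correct and follows essentially the same route as the paper: both compute the graded pieces $M_s\cong I^s/I^{s+1}\oplus (I^{s-1}/I^s)t\oplus\dots\oplus (A/I)t^s$ and reassemble them into the free module $M\cong\widehat A\oplus\widehat A\,t\oplus\widehat A\,t^2\oplus\dots$, concluding flatness from freeness. Your explicit verification of $J^s$ coefficient-by-coefficient and the packaging of the decomposition as the graded algebra isomorphism $M\cong\widehat A[T]$ merely make precise what the paper asserts directly.
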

\begin{proof}
Now write down explicit forms for every direct summand of $M$ keeping in mind that $t$ is a transcendental element of graded degree 1 over $A$. There are $A$-isomorphisms
\begin{eqnarray}
&&M_0=A[t]/(I[t]+(t))\cong A/I;\nonumber\\ &&M_1=(I[t]+(t))/(I[t]+(t))^2\cong I/I^2 \oplus (A/I)t;\nonumber\\
&&M_2=(I[t]+(t))^2/(I[t]+(t))^3\cong I^2/I^3 \oplus (I/I^2)t \oplus (A/I)t^2;\nonumber\\
&&.\;.\;.\;.\;.\;.\;.\;.\;.\;.\;.\;.\;.\;.\;.\;.\;.\;.\;.\;.\;.\;.\;.\;.\;.\;.\;.\;.\;.\;.\;.\;.\;.\;.\;.\;.\;.\;.\;.\;.\;.\;.\;.\;.\;.\;.\; \label{Mi}\\
&&M_s=(I[t]+(t))^s/(I[t]+(t))^{s+1}\cong I^s/I^{s+1}\oplus (I^{s-1}/I^s)t \oplus \dots \oplus (A/I)t^s;\nonumber\\
&&.\;.\;.\;.\;.\;.\;.\;.\;.\;.\;.\;.\;.\;.\;.\;.\;.\;.\;.\;.\;.\;.\;.\;.\;.\;.\;.\;.\;.\;.\;.\;.\;.\;.\;.\;.\;.\;.\;.\;.\;.\;.\;.\;.\;.\;.\;\nonumber
\end{eqnarray}
Here the expressions $(I^s/I^{s+1})t^q$ mean  additive subgroups of the view $\{\overline f t^q| \overline f \in I^s/I^{s+1}\}$.  
\begin{example}
In $M_1$ one has $i_0+i_1 t+\dots +i_s t^s+t(f_0+f_1 t+\dots + f_q t^q)\!\!\mod (I^2[t]+t I[t]+(t^2))\cong 
i_0 \!\!\mod I^2 + tf_0\!\!\mod I$ for $i_r\in I$, $r=0, \dots, s$; $f_u\in A$, $u=0, \dots ,q$. 
\end{example}
Taking in (\ref{Mi}) a direct sum over all $s\ge 0$ one comes to the expression
\begin{eqnarray*}
M&=&\bigoplus_{s\ge 0}(I[t]+(t))^s/(I[t]+(t))^{s+1} \nonumber\\
&\cong &(\bigoplus_{s\ge 0}I^s/I^{s+1})\oplus(\bigoplus_{s\ge 0}I^s/I^{s+1})t \oplus (\bigoplus_{s\ge 0}I^s/I^{s+1})t^2 \oplus (\bigoplus_{s\ge 0}I^s/I^{s+1})t^q \oplus \dots  \nonumber\\
&\cong & \widehat A \oplus \widehat A t \oplus \widehat A t^2 \oplus \dots \oplus \widehat A t^q \oplus \dots \label{flat}
\end{eqnarray*}
This expression shows that $M$ is an $\widehat A$-flat module.
\end{proof}

Set in (\ref{sh1}, \ref{sh2}) the base scheme $T$ to be a single reduced point and hence $\Sigma=S$, $\Sigma_i=S_i$ is an admissible scheme with the natural decomposition $S_i=S_i^0 \cup S_i^{add}$. The  morphism $\delta_i$ of additional component(s) factors through an exceptional divisor $D_i$ of the blowing up morphism $\sigma_i\colon S_i^0\to S_{i-1}$
$$\delta_i\colon S_i^{add} \to D_i \hookrightarrow S_i^0.
$$
Now we know that $\delta_i$ is flat over its scheme-theoretic image $D_i$. Then we can rewrite (\ref{sh1},\ref{sh2}) as $\widetilde E=\sigma^\ast E/\tors $ where the symbol $\tors$ denotes the torsion in the modified sense. It is defined inductively as described below (for the convenience we keep the notations $\widetilde S^0:=(\sigma_1^0\circ \dots \circ \sigma_\ell^{-\ell+1})^{-1}S$ for the principal component and $\widetilde S^{i}=(\sigma_{i+1}^0\circ \dots \circ \sigma_\ell^{-\ell+2})^{-1}S_i^{add}$ for all $i=1, \dots, \ell-1$).

Let $U$ be a Zariski-open subset in one of the 
components $\widetilde S^0$ or $\widetilde S_{j}, j> 0$, and
$\sigma^{\ast}E|_{\widetilde S^{j}}(U)$ a correspond\-ing group of
sections. This group is an $\OO_{\widetilde S^{j}}(U)$-module.
Let $\tors_j(U)$ be the submodule in $\sigma^{\ast}E|_{\widetilde S^j}(U)$ which consists of the sections $s\in \sigma^{\ast}E|_{\widetilde S^j}(U)$ such that $s$ is annihilated by some prime ideal of positive codimension in ${\mathcal O}_{\widetilde S^j}(U)$. The cor\-respondence
 $U \mapsto \tors_j(U)$ defines a subsheaf $\tors_j
\subset \sigma^{\ast}E|_{\widetilde S^{j}}.$ Note that associated
primes of positive codimensions which annihilate the sections $s\in
\sigma^{\ast}E|_{\widetilde S^{j}}(U)$ correspond to subschemes
supported in  $\widetilde S^{add}=\bigcup_{j>0}\widetilde S^{j}.$
Since the scheme  $\widetilde S=\widetilde S^0\cup \widetilde S^{add}$ is connected by the construction done in \cite{Tim3}, the subsheaves
$\tors_j, j\ge 0,$ allow us to construct a subsheaf $\tors \subset
\sigma^{\ast}E$. The latter subsheaf is defined as follows. A
section  $s\in \sigma^{\ast}E|_{\widetilde S^{j}}(U)$ satisfies
the condition $s\in \tors|_{\widetilde S^{j}}(U)$ if and only if\\
$(i)$ there exists a nonzero section  $y\in \OO_{\widetilde S^{j}}(U)$ such
that $ys=0$,\\
$(ii)$ for $j>1$ at least one of the following two requirements is
satisfied: either $y\in {\mathfrak p}$, where $\mathfrak p$ is a prime
ideal of positive codimension, or there exists a Zariski-open subset
 $V\subset \widetilde S$ and a section $s' \in \sigma^{\ast}E (V)$ such that
 $V\supset U$,
$s'|_U=s$ and for  $s'|_{V\cap \widetilde S^{j-1}}$ also
conditions analogous to $(i), (ii)$ hold. For $j=1$ conditions $(ii)$ take the view: either $y\in {\mathfrak p}$, where $\mathfrak p$ is a prime ideal of positive codimension, or there exists a Zariski-open subset $V\subset \widetilde S$ and a section $s' \in \sigma^{\ast}E (V)$ such that $V\supset U$, $s'|_U=s$ and  $s'|_{V\cap \widetilde S_0}\in$$\tors (\sigma^{\ast}E|_{\widetilde S^0})(V\cap \widetilde S^0)$. In the latter expression the torsion subsheaf  $\tors(\sigma^\ast E|_{\widetilde S^0})$ is understood in the usual sense.

The modified torsion subsheaf in each $ (\sigma_1\circ \dots \circ \sigma_i)^\ast E$ is defined in the analogous fashion. Also $\widetilde E_i=(\sigma_1\circ \dots \circ \sigma_i)^\ast E /\tors.$

For the sake of completeness we decsribe the subsheaf $\tors$ for the case of families with the arbitrary base $T$, although this description is similar to the construction for a single scheme.

Let $\Sigma$ be a family of schemes with fibres isomorphic to $S$ and with the base $T$, $\Sigma_i$ be a $T$-scheme with a natural decomposition  $\Sigma_i=\Sigma_i^0 \cup \Sigma_i^{add}$. The morphism  $\delta\!\!\delta_i$ factors through the exceptional divisor  $\D_i$ of the blowup morphism  $\sigma\!\!\!\sigma_i\colon \Sigma_i^0\to \Sigma_{i-1}$:
$$\delta\!\!\delta_i\colon \Sigma_i^{add} \to \D_i \hookrightarrow \Sigma_i^0.
$$
We know that $\delta\!\!\delta_i$ is flat over its scheme-theoretic image $\D_i$. Then one can write  (\ref{sh1}, \ref{sh2}) in the form $\widetilde \E=\sigma\!\!\!\sigma^\ast \E/\tors $, where the symbol   $\tors$ stands for the torsion in the modified sense. It is defined inductively as below. For convenience we preserve the notations $\widetilde \Sigma^0:=(\sigma\!\!\!\sigma_1^0\circ \dots \circ \sigma\!\!\!\sigma_\ell^{-\ell+1})^{-1}\Sigma$ for the principal component and   $\widetilde \Sigma^i=(\sigma\!\!\!\sigma_{i+1}^0\circ \dots \circ \sigma\!\!\!\sigma_\ell^{-\ell+2})^{-1}\Sigma_i^{add}$ for all $i=1, \dots, \ell-1$.

Let $U$ be a Zariski-open subset in one of the components $\widetilde \Sigma^0$ or $\widetilde \Sigma^j, j> 0$, and $\sigma\!\!\!\sigma^{\ast}\E|_{\widetilde \Sigma^j}(U)$ the corresponding group of sections. This group is a $\OO_{\widetilde \Sigma^j}(U)$-module. Let $\tors_j(U)$ be a submodule in  $\sigma\!\!\!\sigma^{\ast}\E|_{\widetilde \Sigma^j}(U)$ which consists of sections $s\in \sigma\!\!\!\sigma^{\ast}\E|_{\widetilde \Sigma^j}(U)$ such that  $s$ is annihilated by some prime ideal of positive codimension in ${\mathcal O}_{\widetilde \Sigma^j}(U)$. The correspondence $U \mapsto \tors_j(U)$ defines a subsheaf  $\tors_j \subset \sigma\!\!\!\sigma^{\ast}\E|_{\widetilde \Sigma^j}.$ Note that associated prime ideals of positive codimensions which annihilate sections $s\in \sigma\!\!\!\sigma^{\ast}\E|_{\widetilde \Sigma^j}(U)$, correspond to subschemes supported in  $\widetilde \Sigma^{add}=\bigcup_{j>0}\widetilde \Sigma^j.$ Since the scheme $\widetilde \Sigma=\widetilde \Sigma^0\cup \widetilde \Sigma^{add}$ is connected by the construction (because the additional components grown at any step intersect the principal component of this step), the subsheaves  $\tors_j, j\ge 0,$ allow to construct a subsheaf $\tors \subset \sigma^{\ast}\E$. The latter subsheaf is defined as follows. A section $s\in \sigma\!\!\!\sigma^{\ast}\E|_{\widetilde \Sigma^j}(U)$ satisfies the condition  $s\in \tors|_{\widetilde \Sigma^j}(U)$ if and only if \\
$(i)$ there exists a nonzero section $y\in \OO_{\widetilde \Sigma^j}(U)$ such that $ys=0$,\\
$(ii)$ for  $j>1$ at least one of requirements holds: either $y\in {\mathfrak p}$, where $\mathfrak p$ is a prime ideal of positive codimension, or there exist a Zariski-open subset  $V\subset \widetilde \Sigma$ and a section $s' \in \sigma\!\!\!\sigma^{\ast}\E (V)$ such that $V\supset U$, $s'|_U=s$, and for  $s'|_{V\cap \widetilde \Sigma^{j-1}}$ the requirements similar to  $(i), (ii)$ are fulfilled. For  $j=1$ the conditions $(ii)$ take the view: either $y\in {\mathfrak p}$, where $\mathfrak p$ is a prime ideal of positive codimension, or there is Zariski-open subset  $V\subset \widetilde \Sigma$ and a section $s' \in \sigma\!\!\!\sigma^{\ast}\E (V)$ such that  $V\supset U$, $s'|_U=s$ and $s'|_{V\cap \widetilde \Sigma^0}\in$$\tors (\sigma\!\!\!\sigma^{\ast}\E|_{\widetilde \Sigma^0})(V\cap \widetilde \Sigma^0)$. In the latter expression the subsheaf of torsion $\tors(\sigma\!\!\!\sigma^\ast \E|_{\widetilde \Sigma^0})$ is understood as follows:  for any open  $\widetilde U \subset \widetilde \Sigma^0$ $\tors(\sigma\!\!\!\sigma^\ast \E|_{\widetilde \Sigma^0})(\widetilde U)=\{s\in \sigma\!\!\!\sigma^\ast \E (\widetilde U)| \Supp s \subset \D \cap \widetilde U \}$. $\D$ stands for a closed subscheme in $\widetilde \Sigma^0$ where $\sigma\!\!\!\sigma|_{\widetilde \Sigma^0}$ is not an isomorphism.

Modified torsion subsheaves in each of sheaves $ (\sigma\!\!\!\sigma_1\circ \dots \circ \sigma\!\!\!\sigma_i)^\ast \E$ is defined in similar way. Also $\widetilde \E_i=(\sigma\!\!\!\sigma_1\circ \dots \circ \sigma\!\!\!\sigma_i)^\ast \E /\tors.$


\end{document}